\newcommand{\lyxaddress}[1]{
	\par {\raggedright #1
	\vspace{1.4em}
	\noindent\par}
}
\theoremstyle{plain}
\newtheorem{thm}{\protect\theoremname}
\theoremstyle{definition}
\newtheorem{defn}[thm]{\protect\definitionname}
\theoremstyle{definition}
\newtheorem{example}[thm]{\protect\examplename}
\newenvironment{lyxlist}[1]
	{\begin{list}{}
		{\settowidth{\labelwidth}{#1}
		 \setlength{\leftmargin}{\labelwidth}
		 \addtolength{\leftmargin}{\labelsep}
		 }}
	{\end{list}}
\theoremstyle{plain}
\newtheorem{lem}[thm]{\protect\lemmaname}
\date{}
\definecolor{blue3}{RGB}{0,0,255}
\definecolor{blue4}{RGB}{0,0,204}
\definecolor{skyblue3}{RGB}{0,102,255}
\definecolor{green4}{RGB}{0,153,0}
\definecolor{green5}{RGB}{0,102,0}
\definecolor{red5}{RGB}{153,0,0}
\definecolor{magenta4}{RGB}{204,0,204}
\providecommand{\definitionname}{Definition}
\providecommand{\examplename}{Example}
\providecommand{\lemmaname}{Lemma}
\providecommand{\theoremname}{Theorem}
\begin{document}

\title{\noindent \textbf{In how many distinct ways can flocks be formed?}\\
\textbf{A problem in sheep combinatorics}}

\author{\noindent Johanna Langner{*} and Henryk A. Witek}
\maketitle
\begin{onehalfspace}

\lyxaddress{\noindent \begin{center}
Department of Applied Chemistry and Institute of Molecular Science,
National Chiao Tung University, University Rd., 30010 Hsinchu, Taiwan\\
{*} johanna.langner@arcor.de
\par\end{center}}
\end{onehalfspace}
\begin{abstract}
In this short paper, we extend the concept of the strict order polynomial
$\Omega_{P}^{\circ}(n)$, which enumerates the number of strict order-preserving
maps $\phi:P\rightarrow\boldsymbol{n}$ for a poset $P$, to the extended
strict order polynomial $\text{E}_{P}^{\circ}(n,z)$, which enumerates
analogous maps for the elements of the power set $\mathcal{P}(P)$.
The problem at hand immediately reduces to the problem of enumeration
of linear extensions for the subposets of $P$. We show that for every
$Q\subset P$ a given linear extension $v$ of $Q$ can be associated
with a unique linear extension $w$ of $P$. The number of such linear
extensions $v$ (of length $k$) associated with a given linear extension
$w$ of $P$ can be expressed compactly as $\binom{\text{del}_{P}(w)}{k}$,
where $\text{del}_{P}(w)$ is the number of deletable elements of
$w$ defined in the text. Consequently the extended strict order polynomial
$\text{E}_{P}^{\circ}(n,z)$ can be represented as follows
\[
\text{E}_{P}^{\circ}(n,z)=\sum_{w\in\mathcal{L}(P)}\sum_{k=0}^{p}\binom{\text{del}_{P}(w)}{p-k}\binom{n+\text{des}(w)}{k}z^{k}.
\]
The derived equation can be used for example for solving the following
combinatorial problem: Consider a community of $p$ shepherds, some
of whom are connected by a master-apprentice relation (expressed as
a poset $P$). Every morning, $k$ of the shepherds go out and each
of them herds a flock of sheep. Community tradition stipulates that
each of these $k$ shepherds will herd at least one and at most $n$
sheep, and an apprentice will always herd fewer sheep than his master
(or his master's master, etc). In how many ways can the flocks be
formed? The strict order polynomial answers this question for the
case in which all $p$ shepherds go to work, and the extended strict
order polynomial considers also all the situations in which some of
the shepherds decide to take a day off.\maketitle
\end{abstract}

\section{Notation and Definitions}

\subsection{Standard terminology}

The current communication closely follows the poset terminology introduced
in Stanley's book \cite{stanley_enumerative_1986}. The reader familiar
with the terminology can jump directly to Subsection~\ref{subsec:New-terminology}.
A \emph{partially ordered set} $P$, or \emph{poset} for short, is
a set together with a binary relation $<_{P}$. In this manuscript,
we are concerned with finite posets $P$ consisting of $p$ elements
and with strict partial orders, meaning that the relation $<_{P}$
is irreflexive, transitive and asymmetric. An \emph{induced subposet}
$Q\subset P$ is a subset of $P$ together with the order $<_{Q}$
inherited from $P$ which is defined by $s<_{P}t\iff s<_{Q}t$. The
symbol $<$ shall denote the usual relation ,,larger than'' in $\mathbb{N}$.
The symbol $[\,n\,]$ stands for the set $\left\{ 1,2,\ldots,n\right\} $,
and $(n,m)$ stands for the set $\left\{ n+1,n+2,\ldots,m-1\right\} $.
The symbol $\boldsymbol{n}$ represents the chain $1<2<3<...<n$.
A map $\phi:P\rightarrow\mathbb{\mathbb{\mathbb{N}}}$ is a \emph{strict
order-preserving map} if it satisfies $s<_{P}t\Rightarrow\phi(s)<\phi(t)$.
A \emph{natural labeling} of a poset $P$ is an order-preserving bijection
$\omega:P\rightarrow[\,p\,]$. A \emph{linear extension} of $P$ is
an order-preserving bijection $\sigma:P\rightarrow\boldsymbol{p}$.
A linear extension $\sigma$ can be represented as a permutation $\omega\circ\sigma^{-1}$
expressed as a sequence $w=w_{1}w_{2}\ldots w_{p}$ with $w_{i}=\omega(\sigma{}^{-1}(i))$;
the sequence $w$ shall also be referred to as a linear extension
in the following. The set of all such sequences $w$ is denoted by
$\mathcal{L}\left(P\right)$ and is referred to as the \emph{Jordan-Hölder
set} of $P$. If two subsequent labels $w_{i}$ and $w_{i+1}$ in
$w$ stand in the relation $w_{i}>w_{i+1}$, then the index $i$ is
called a \emph{descent} of $w$. The total number of descents of $w$
is denoted by $\text{des}(w)$. The strict order polynomial $\Omega_{P}^{\circ}(n)$
of a poset $P$ \cite{stanley_chromatic-like_1970,stanley_ordered_1972,stanley_enumerative_1986},
which enumerates the strict order-preserving maps $\phi:P\rightarrow[\,n\,]$,
is given by 
\begin{equation}
\Omega_{P}^{\circ}(n)=\sum_{w\in\mathcal{L}(P)}\binom{n+\text{des}(w)}{p}.\label{eq:StrictOrderPoly}
\end{equation}

\subsection{Non-standard terminology\label{subsec:New-terminology}}

We will often construct\textemdash by slight abuse of notation\textemdash a
subposet of $P$ by specifying a set of labels $D\subset[\,p\,]$:
The expression $P\setminus D$ stands for the induced subposet with
the elements $\left\{ p\in P\,\vert\,\omega(p)\notin D\right\} $.
Clearly the subposet constructed in this way has $p-\#D$ elements;
and the full set $\mathcal{P}(P)$ of subposets of $P$ stands in
direct correspondence to the power set of $[\,p\,]$: $\mathcal{P}(P)=\left\{ P\setminus D\,\vert\,D\in\mathcal{P}([\,p\,])\right\} $.
Similarly, if $w$ is a sequence in $\mathcal{L}\left(P\right)$ and
$D$ is a subset of $[\,p\,]$, let us denote by $w\setminus D$ the
subsequence obtained by deleting all the elements of $D$ from $w$.
For example, $13245\setminus\{1,4\}=325$. Clearly, deleting some
arbitrary set $D$ from two different sequences may produce the same
sequence: for example, $13245\setminus\{1,4\}=325=32154\setminus\{1,4\}$.
We will later (Lemma~\ref{Equiv>}) see that deleting \emph{deletable}
elements (see Def.~\ref{def:deletable}) from two distinct sequences
always results in two distinct subsequences.

Further, let us slightly change the representation of linear extensions
of subposets: Normally, one would assign to each subposet $Q=P\setminus D$
a new natural labeling $\omega^{Q}:Q\rightarrow[\,q\,]$, and then
express the linear extensions of $Q$ as sequences of the elements
of $[\,q\,]$. Instead, we avoid re-labeling each subposet, and use
instead the labeling $\omega:Q\rightarrow[\,p\,]\setminus D$ inherited
from $P$. Then, a linear extension $\sigma$ of $Q$ is represented
by a sequence $w=w_{1}\ldots w_{q}$ defined in the usual way: $w_{i}=\omega(\sigma{}^{-1}(i))$.
The set of such sequences shall still be denoted by $\mathcal{L}(Q)$.
Using this notation, it is now easy to see (but properly demonstrated
later in Lemma~\ref{Equiv<}) that if $w$ is a linear extension
of $P$, then $w\setminus D$ is a linear extension of $P\setminus D$.

\section{Main results\label{sec:Main-results}}

In this short communication, we extend the concept of the strict order
polynomial $\Omega_{P}^{\circ}(n)$ given by Eq.~(\ref{eq:StrictOrderPoly})
to the extended strict order polynomial $\text{E}_{P}^{\circ}(n,z)$
given by Eq.~(\ref{eq:GenOrderPoly}), which enumerates and classifies
the totality of strict order-preserving maps $\phi:Q\rightarrow\boldsymbol{n}$
with $Q\subset P$. We show below in Theorem~\ref{Extended-order-polynomial}
that there exists a compact combinatorial expression characterizing
$\text{\emph{E}}_{P}^{\circ}(n,z)$. In the following, we shall always
assume that $P$ is a poset with with $p$ elements, a strict order
$<_{P}$ , and a natural labeling $\omega$. Subposets of $P$ are
always assumed to be induced.
\begin{defn}
The \emph{extended strict order polynomial $\text{E}_{P}^{\circ}(n,z)$}
of a poset $P$ is defined as
\begin{equation}
\text{E}_{P}^{\circ}(n,z)=\sum_{Q\subset P}\Omega_{Q}^{\circ}(n)z^{\#Q},\label{eq:GenOrderPoly}
\end{equation}
where the sum runs over all the induced subposets $Q$ of $P$.
\end{defn}

\begin{example}
Consider a family of three shepherds: Fiadh, Fiadh's father Aidan,
and Aidan's father Lorcan. Every day, some of the shepherds go out
and each herd a flock of at least one and at most $n$ sheep. Aidan
always herds more sheep than Fiadh, and Lorcan always herds more sheep
than both Fiadh and Aidan. How many possible ways are there of assigning
flock sizes to the shepherds?
\begin{figure}[H]
\noindent \begin{centering}
\includegraphics[scale=0.65]{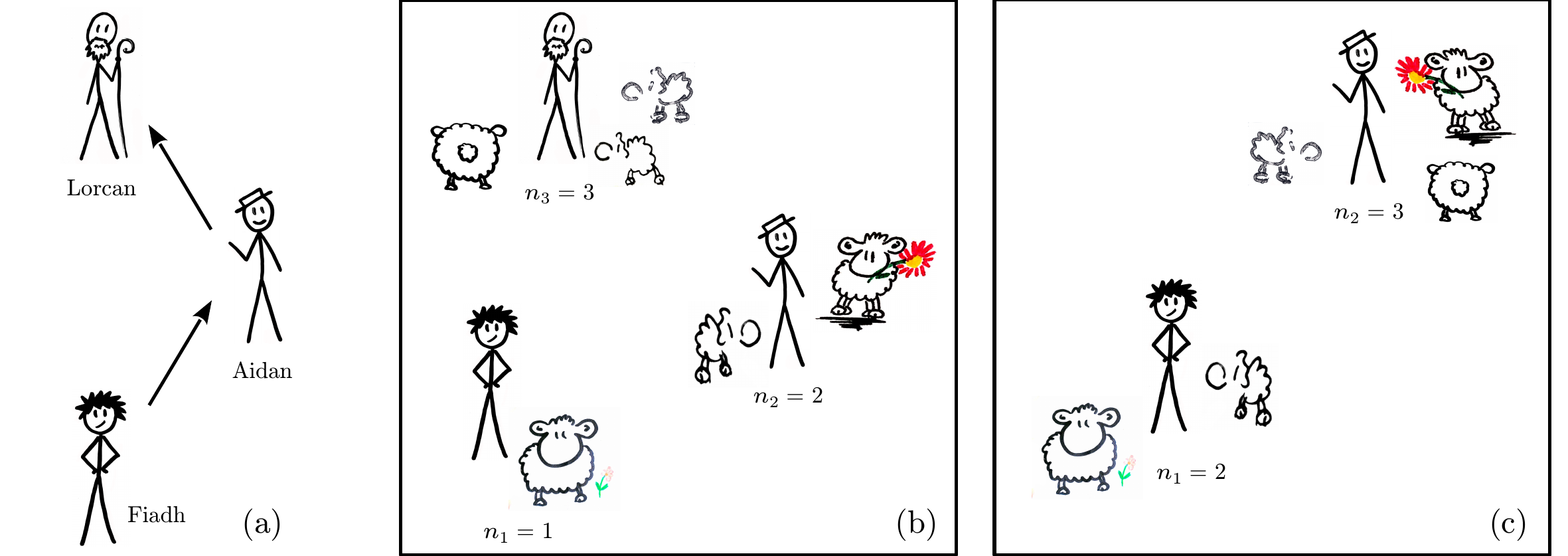}
\par\end{centering}
\caption{\label{fig:sheep}The poset formed by the three shepherds, shown in
(a), is isomorphic to the chain $\boldsymbol{3}$. Situations such
as the one depicted in (b), where all three shepherds herd a flock
of sheep, are counted by the strict order polynomial $\Omega_{P}^{\circ}(n)$.
The extended strict order polynomial $\text{E}_{P}^{\circ}(n,z)$
also counts situations such as the one shown in (c), where only a
subset of the shepherds are present.}
\end{figure}

The three shepherds together with the seniority relation form a poset
$P$ isomorphic to the chain~$\boldsymbol{3}$: $\text{Fiadh}<_{P}\text{Aidan}<_{P}\text{Lorcan}$,
see Fig.~\ref{fig:sheep}~(a). Let us denote the number of sheep
in Fiadh's flock by $n_{1}$, the size of Aidan's flock by $n_{2}$
and the size of Lorcan's flock by $n_{3}$; then the above conditions
tell us that $1\leq n_{1}<n_{2}<n_{3}\leq n$. On a day when all three
shepherds go to work, such as depicted in Fig.~\ref{fig:sheep}~(b),
the numbers $n_{1}$, $n_{2}$ and $n_{3}$ can be chosen in $\Omega_{P}^{\circ}(n)=\binom{n}{3}$
ways. When only Fiadh and Aidan go to work, see Fig.~\ref{fig:sheep}~(c),
i.e. for the subposet $Q=\{\text{Fiadh},\text{Aidan}\}$, we have
to choose the two numbers $n_{1}$ and $n_{2}$ such that $1\leq n_{1}<n_{2}\leq n$;
there are $\Omega_{Q}^{\circ}(n)=\binom{n}{2}$ ways to do so. The
same is true whenever two of the three shepherds are present. Clearly,
whenever only one shepherd works, there are $\binom{n}{1}$ ways to
choose his flock size, and when all shepherds take the day off, there
is only one possibility. Therefore, the extended strict order polynomial
$\text{E}_{P}^{\circ}(n,z)$ has the form
\begin{eqnarray}
\text{E}_{P}^{\circ}(n,z) & = & \Omega_{P}^{\circ}(n)z^{3}+\left(\Omega_{\{\text{Fiadh},\text{Aidan}\}}^{\circ}(n)+\Omega_{\{\text{Fiadh},\text{Lorcan}\}}^{\circ}(n)+\Omega_{\{\text{Aidan},\text{Lorcan}\}}^{\circ}(n)\right)z^{2}\label{eq:exEP}\\
 &  & +\left(\Omega_{\{\text{Fiadh}\}}^{\circ}(n)+\Omega_{\{\text{Aidan}\}}^{\circ}(n)+\Omega_{\{\text{Lorcan}\}}^{\circ}(n)\right)z^{1}+\Omega_{\{\text{Fiadh}\}}^{\circ}(n)z^{0}\nonumber \\
 & = & \binom{n}{3}z^{3}+3\binom{n}{2}z^{2}+3\binom{n}{1}z^{1}+\binom{n}{0}z^{0}\nonumber \\
 & = & \sum_{k=0}^{3}\binom{3}{k}\binom{n}{k}z^{k}.\nonumber 
\end{eqnarray}
The very compact expression for $\text{E}_{P}^{\circ}(n,z)$ given
in the last line of Eq.~(\ref{eq:exEP}) can be obtained directly
by applying the following theorem.
\end{example}

\begin{thm}
\label{Extended-order-polynomial}The extended strict order polynomial
is given by
\begin{equation}
\text{\emph{E}}_{P}^{\circ}(n,z)=\sum_{w\in\mathcal{L}(P)}\sum_{k=0}^{p}\binom{\text{del}_{P}(w)}{p-k}\binom{n+\text{des}(w)}{k}z^{k},\label{eq:Znz}
\end{equation}
where $\text{del}_{P}(w)$ denotes the number of deletable labels
in $w$.
\end{thm}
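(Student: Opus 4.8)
The plan is to start from the definition (\ref{eq:GenOrderPoly}) and substitute the classical expansion (\ref{eq:StrictOrderPoly}) of the strict order polynomial of each subposet. Writing every subposet uniquely as $Q=P\setminus D$ for some $D\subseteq[\,p\,]$, so that $\#Q=p-\#D$, this turns the definition into the double sum
\[
\text{E}_{P}^{\circ}(n,z)=\sum_{D\subseteq[\,p\,]}\;\sum_{v\in\mathcal{L}(P\setminus D)}\binom{n+\text{des}(v)}{\,p-\#D\,}\,z^{\,p-\#D}.
\]
The whole task is then to re-express this sum over the pairs $(D,v)$ --- that is, over all linear extensions $v$ of all subposets of $P$ --- as a single sum indexed by $\mathcal{L}(P)$.

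Second, I would set up the central bijection between the pairs $(D,v)$ with $v\in\mathcal{L}(P\setminus D)$ and the pairs $(w,D)$ with $w\in\mathcal{L}(P)$ and $D$ a subset of the deletable labels of $w$ (Def.~\ref{def:deletable}), realized by $v=w\setminus D$. Lemma~\ref{Equiv<} guarantees the map $(w,D)\mapsto(D,w\setminus D)$ is well defined, since $w\setminus D\in\mathcal{L}(P\setminus D)$ whenever $w\in\mathcal{L}(P)$. Injectivity --- that two distinct parents with $D$ among their deletable labels cannot collapse to the same $v$ --- is exactly the content of Lemma~\ref{Equiv>}. For surjectivity I would show that each $v\in\mathcal{L}(P\setminus D)$ is recovered in this way from exactly one $w\in\mathcal{L}(P)$ having $D$ among its deletable labels --- the \emph{unique parent} $w$ promised in the abstract --- by reinserting the labels of $D$ into $v$ in the only positions consistent with $P$ and with deletability.

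Third, I would invoke the property that removing deletable labels leaves the descent statistic unchanged, so that $\text{des}(v)=\text{des}(w)$ for every $v=w\setminus D$ arising from the bijection. This is what allows $\binom{n+\text{des}(v)}{p-\#D}$ to be replaced by $\binom{n+\text{des}(w)}{p-\#D}$, which no longer depends on $v$. Regrouping the double sum first by the parent $w$ and then by the size $\#D=p-k$ of the deleted set, the inner count becomes purely combinatorial: for a fixed $w$ there are exactly $\binom{\text{del}_{P}(w)}{p-k}$ subsets $D$ of its $\text{del}_{P}(w)$ deletable labels with $\#D=p-k$. Writing $z^{\#Q}=z^{k}$ and summing over $k$ from $0$ to $p$ then yields (\ref{eq:Znz}).

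The main obstacle is the bijection of the second step, specifically its surjectivity and the uniqueness of the parent $w$: one must verify that every linear extension of every subposet is obtained from one and only one linear extension of $P$ by deleting deletable labels, and that the reinsertion reconstructing $w$ is forced. This is precisely where the exact formulation of deletability in Def.~\ref{def:deletable} does the work, supported by Lemmas~\ref{Equiv<} and \ref{Equiv>}; the descent-preservation property is a close second, since without it the binomial coefficient could not be pulled out of the inner sum. Everything after the bijection is routine binomial bookkeeping.
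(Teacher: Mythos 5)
Your proposal is correct and follows essentially the same route as the paper: expand each $\Omega_{Q}^{\circ}(n)$ via Eq.~(\ref{eq:StrictOrderPoly}), reindex the resulting double sum over pairs $(D,v)$ as a sum over pairs $(w,D)$ with $w\in\mathcal{L}(P)$ and $D\subset\text{Del}_{P}(w)$ (which the paper packages as Lemma~\ref{all-lin-ext}, itself a corollary of Lemmata~\ref{Equiv>} and~\ref{Equiv<}), invoke descent preservation (Lemma~\ref{same-nr-descents}) to replace $\text{des}(v)$ by $\text{des}(w)$, and finish by counting the $\binom{\text{del}_{P}(w)}{p-k}$ subsets of each size. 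The only cosmetic difference is that you group directly by $\#D=p-k$ where the paper sums over $\#D=k$ and then reverses the inner summation index.
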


Intuitively speaking, deletable labels can be understood to be the
entries of $w$ which are not essential for distinguishing $w$ from
other elements of $\mathcal{L}(P)$. Theorem~\ref{Extended-order-polynomial}
is based on the fact that every element of $\bigcup_{Q\subset P}\mathcal{L}(Q)$
can be uniquely associated with some element of $\mathcal{L}(P)$.
Formally speaking, it is possible to define an equivalence relation
$\sim$ on $\bigcup_{Q\subset P}\mathcal{L}(Q)$ such that $\bigcup_{w\in\mathcal{L}(P)}\left[w\right]_{\sim}=\bigcup_{Q\subset P}\mathcal{L}(Q)$.
This concept is illustrated below in Examples~\ref{ex:2x2} and~\ref{ex:123}.
The proof of Theorem~\ref{Extended-order-polynomial} will be given
at the end of this paper after formalizing the concept of deletable
labels and proving some technical lemmata.
\begin{example}
\label{ex:2x2}Let us consider the lattice $P=\boldsymbol{2}\times\boldsymbol{2}$,
for which $\mathcal{L}(P)=\left\{ 1234,1324\right\} $ and $\bigcup_{Q\subset P}\mathcal{L}(Q)=\left\{ \varnothing,1,2,3,4,12,13,14,23,24,34,32,123,124,134,234,324,132,1234,1324\right\} $.
Our results allow us to partition the set $\bigcup_{Q\subset P}\mathcal{L}(Q)$
of linear extensions into two equivalence classes $\left[1234\right]_{\sim}$
and $\left[1324\right]_{\sim}$:
\begin{figure}[H]
\centering{}\includegraphics[scale=0.5]{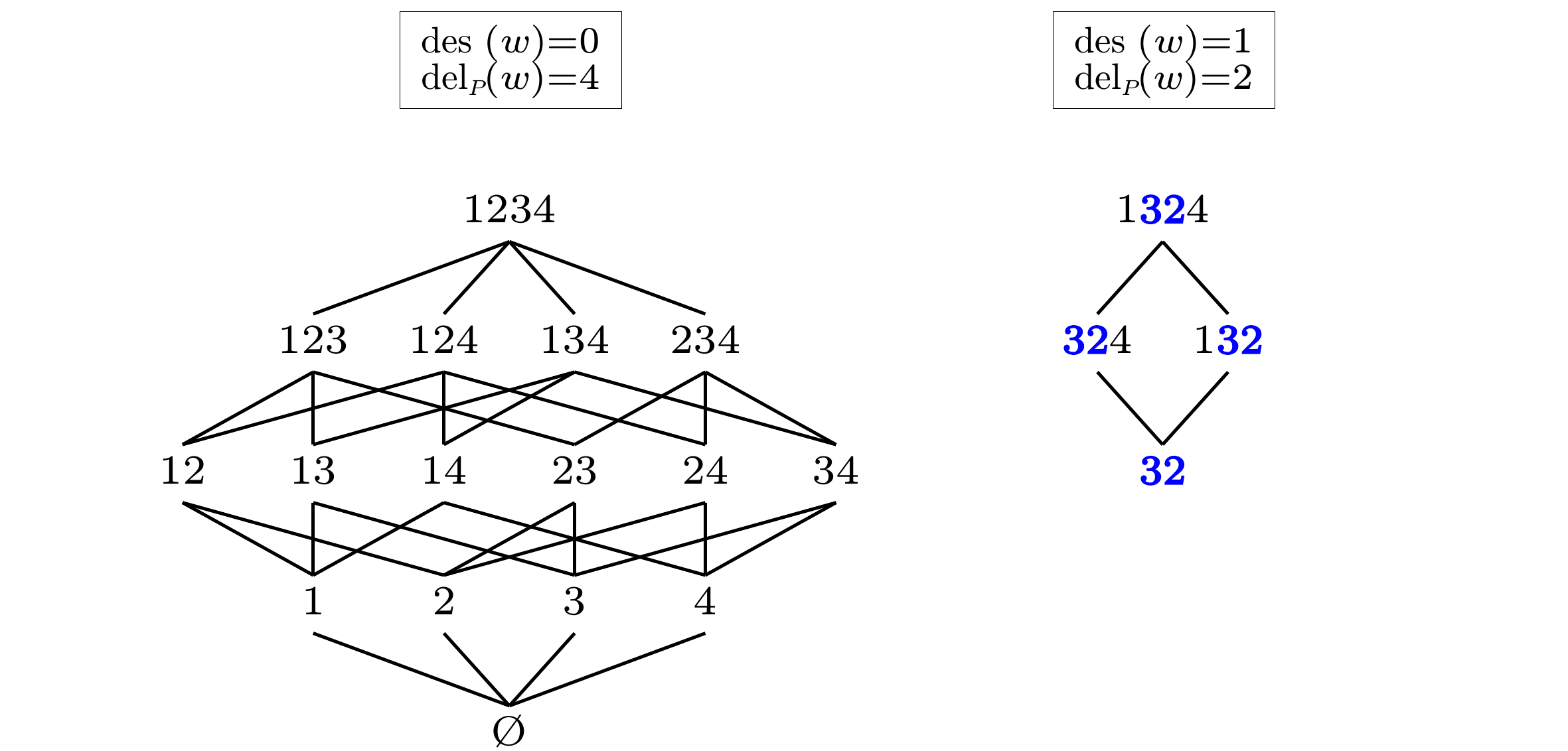}
\end{figure}
The first family, originating from the linear extension $1234$, is
characterized by zero descents ($\text{des}(w)=0$) and zero non-deletable
elements ($\text{del}_{P}(w)=4-0$), and thus contains $\binom{4-0}{k-0}$
sequences of each length $k$. The second family, originating from
the linear extension $1324$, is characterized by one descent ($\text{des}(w)=1$)
and two non-deletable elements \textcolor{blue3}{\textbf{3}} and
\textcolor{blue3}{\textbf{2}} ($\text{del}_{P}(w)=4-2$), and thus
contains $\binom{4-2}{k-2}$ sequences of each length $k$. Consequently,
the extended strict order polynomial is given by
\[
\text{E}_{P}^{\circ}(n,z)=\sum_{k=0}^{4}\left(\binom{4-0}{k-0}\binom{n}{k}+\binom{4-2}{k-2}\binom{n+1}{k}\right)z^{k}.
\]
\end{example}

\begin{example}
\label{ex:123}Let us consider the poset $P=\left\{ a,b,c\right\} $
of three non-comparable elements. We have $\mathcal{L}(P)=\left\{ 123,132,213,231,312,321\right\} $
and $\bigcup_{Q\subset P}\mathcal{L}(Q)=\left\{ \varnothing,1,2,3,12,21,13,31,23,32,123,132,213,231,312,321\right\} $.
Our results allow us to classify the linear extensions in $\bigcup_{Q\subset P}\mathcal{L}(Q)$
into six families
\begin{figure}[H]
\centering{}\includegraphics[scale=0.5]{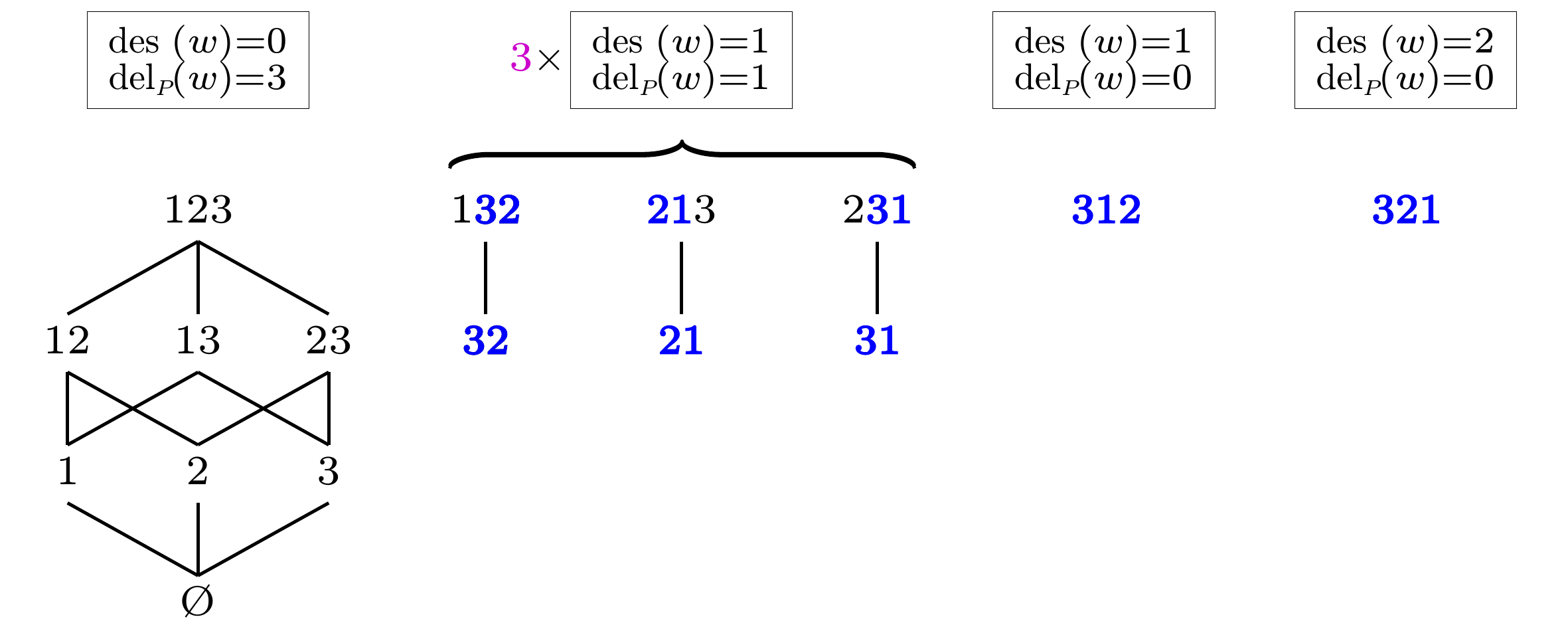}
\end{figure}
each of which is characterized by a pair of numbers $\left(\text{des}(w),\text{del}_{P}(w)\right)$
specified above. The extended strict order polynomial is given by
\[
\text{E}_{P}^{\circ}(n,z)=\sum_{k=0}^{3}\left(\binom{3}{k}\binom{n}{k}+\textcolor{magenta4}{3}\binom{3-2}{k-2}\binom{n+1}{k}+\binom{3-3}{k-3}\binom{n+1}{k}+\binom{3-3}{k-3}\binom{n+2}{k}\right)z^{k}.
\]
\end{example}

\section{Classification of linear extensions}
\begin{defn}
\label{def:deletable}Consider a sequence $w=w_{1}w_{2}\ldots w_{p}$
which represents a linear extension $\sigma$ of $P$. A label $w_{i}$
is \emph{deletable} if
\begin{lyxlist}{00.00.00}
\item [{$1)$}] neither $i-1$ nor $i$ is a descent, and
\item [{$2)$}] at least one the following is true:
\begin{lyxlist}{00.00.00}
\item [{$a)$}] $w_{j}<w_{i}$ for any $j\in(0,i)$, or
\item [{$b)$}] there exists a label $w_{k}$ with $\omega^{-1}(w_{k})<_{P}\omega^{-1}(w_{i})$
such that $w_{j}<w_{i}$ for all $j\in(k,i)$.
\end{lyxlist}
\end{lyxlist}
The set of deletable elements of $w$ is denoted by $\text{Del}_{P}(w)$,
and its cardinality by $\text{del}_{P}(w)=\#\text{Del}_{P}(w)$. Labels
that are not deletable from $w$ are called \emph{fixed} in $w$.
\end{defn}

Loosely speaking, a label is fixed if it contributes to a descent,
or if it appears after a descent even though it would also be allowed
to appear in front of it. An inclined reader might have already noticed
that every deletable element appears in a uniquely defined position
of $w$, which may be described as ,,as early as possible without
interfering with the descent pattern''. In other words, removing
a deletable element $w_{i}$ from $w$ and reinserting it at any earlier
position cannot result in a linear extension with the same labels
involved in the descent pairs. This concept constitutes the main idea
behind associating a linear extension $v\in\bigcup_{Q\subset P}\mathcal{L}(Q)$
with a unique linear extension $w\in\mathcal{L}(P)$ developed in
detail later in this communication.

Let us now demonstrate how Definition~\ref{def:deletable} can be
used in a direct manner to identify deletable and fixed labels in
linear extensions. A useful, easy-to-use-in-practice graphical reinterpretation
of Definition~\ref{def:deletable} is introduced in Example~\ref{exa:le_graph}.
\begin{example}
Consider again the poset $P=\boldsymbol{2}\times\boldsymbol{2}$ shown
in Fig.~\ref{fig:3x3 Hasse} and its two linear extensions $w=1234$
and $w'=1324$. Let us first determine which of the labels are deletable
from $1234$. We have no descents in $1234$, so the condition \emph{$1)$}
of Definition~\ref{def:deletable} is satisfied for all of the labels.
We only need to verify condition \emph{$2)$} of Definition~\ref{def:deletable}.
The label $w_{1}=1$ is deletable because the interval $\left(0,1\right)$
is empty, and therefore condition $2a)$ is vacuously satisfied. Since
$w_{1}<w_{2}<w_{3}<w_{4}$, we find also for the remaining labels
$w_{i}=2,3,4$ that condition $2a)$ is satisfied. This shows that
all four labels in the linear extension $1234$ are deletable: $\text{Del}_{P}(1234)=\left\{ 1,2,3,4\right\} $
and $\text{del}_{P}(1234)=4$. This is not the case for the linear
extension $1324$. The labels $3$ and $2$ are fixed by condition
$1)$ because the position $i=2$ is a descent. The labels $1$ and
$4$ can be shown to be deletable in exactly the same way as for $1234$.
Consequently, in the linear extension $1324$ the set of deletable
elements is $\text{Del}_{P}(1324)=\left\{ 1,4\right\} $ and $\text{del}_{P}(1324)=2$.
\end{example}

\begin{figure}[H]
\noindent \centering{}\includegraphics[scale=0.5]{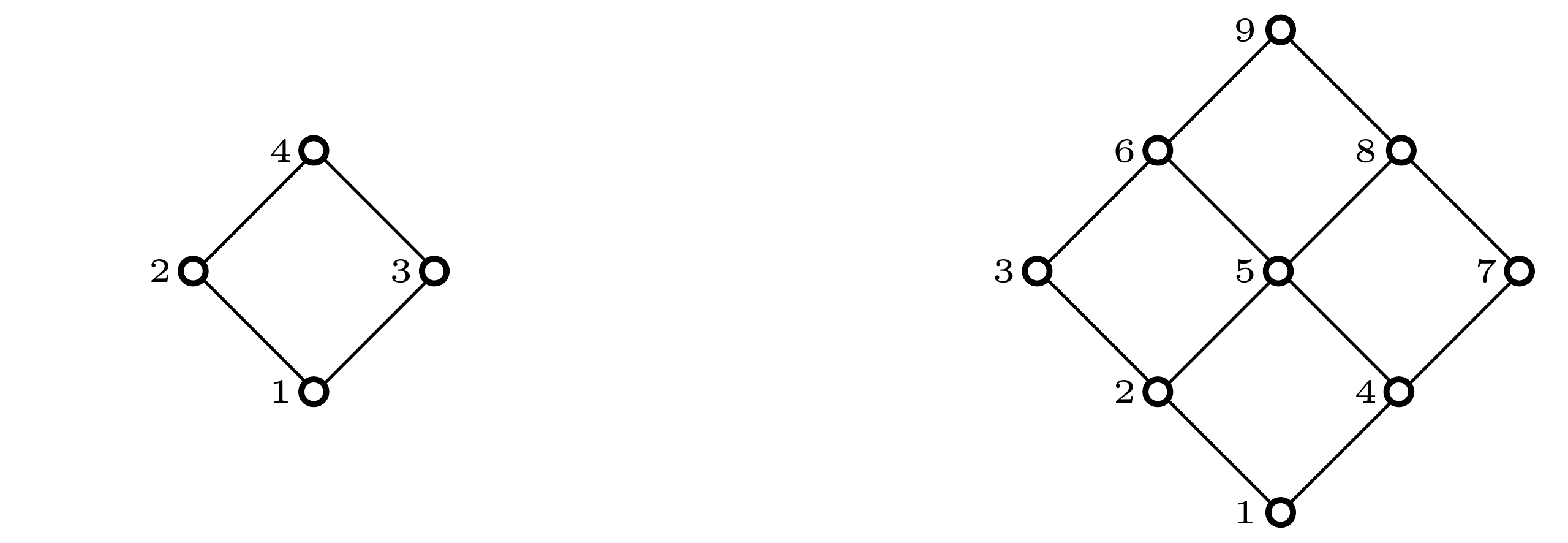}\caption{\label{fig:3x3 Hasse}Hasse diagram of two posets $P=\boldsymbol{2}\times\boldsymbol{2}$
and $P=\boldsymbol{3}\times\boldsymbol{3}$ together with (natural)
labelings $\omega$.}
\end{figure}

\begin{example}
Consider the linear extension $w=124753689$ of the poset $P=\boldsymbol{3}\times\boldsymbol{3}$
shown in Fig.~\ref{fig:3x3 Hasse}. By condition \emph{$1)$} of
Definition~\ref{def:deletable}, the labels $3,5$ and $7$ are not
deletable; the remaining labels might be deletable if they satisfy
condition $2a)$ or $2b)$ of Definition~\ref{def:deletable}. Since
$w_{1}<w_{2}<w_{3}$, the labels $w_{1}=1,w_{2}=2$ and $w_{3}=4$
are deletable by condition $2a)$; likewise, due to $w_{j}<w_{8}<w_{9}$
for all $j=1,\ldots,7$, the labels $8$ and $9$ are deletable by
condition $2a)$. Thus the only remaining label for which the deletability
(or non-deletability) is not immediately obvious\textemdash and hence
the machinery of Definition~\ref{def:deletable} must be fully put
to work\textemdash is $w_{7}=6$. There exists the label $w_{5}=5$
with $\omega^{-1}(5)<_{P}\omega^{-1}(6)$ , and for the only label
$w_{j}$ with $j\in(5,7)=\{6\}$ we find $w_{6}=3<w_{7}=6$, so by
condition $2b)$, the label $w_{7}=6$ is deletable. Therefore, $\text{Del}_{P}(w)=\left\{ 1,2,4,6,8,9\right\} $
and $\text{del}_{P}(w)=6$.
\end{example}

\begin{example}
\label{exa:le_graph}Linear extensions and their fixed and deletable
elements can be visualized and analyzed graphically in the following
way. For a given linear extension $\sigma$, plot the Hasse diagram
of $P$ in such a way that each element $t_{i}$ of $P$ is represented
as a point in a Cartesian plane with the coordinates $\left(\sigma(t_{i}),\omega(t_{i})\right)=\left(i,w_{i}\right)$,
see Fig.~\ref{fig:Graphical-representation}$(a)$. The total order
implied by the linear extension $\sigma$ is easily determined by
connecting the elements in the resulting diagram from left to right,
as shown using red arrows in Fig.~\ref{fig:Graphical-representation}$(b)$.
The fixed and deletable labels can now be identified in a graphical
way, as illustrated in Fig.~\ref{fig:Graphical-representation}$(c)$:
Whenever $i$ is a descent, i.e. whenever an element $t_{i}$ (in
the position $\left(i,w_{i}\right)$) is displayed above $t_{i+1}$
(in the position $\left(i+1,w_{i+1}\right)$), both $w_{i}$ and $w_{i+1}$
are fixed (compare condition \emph{$1)$} of Definition~\ref{def:deletable});
this is marked by coloring the corresponding elements. The line connecting
the points $\left(i,w_{i}\right)$ and $\left(i+1,w_{i+1}\right)$
then casts a ,,shadow'' to the right; and all elements in the shadow
have fixed labels as long as they are not covered by any elements
in the same shadow including $t_{i}$ and $t_{i+1}$ (if they are
covered, their labels are deletable by condition $2b)$).

\begin{figure}[H]
\noindent \begin{centering}
\begin{minipage}[t]{0.33\columnwidth}%
\noindent \begin{center}
\includegraphics[scale=0.4]{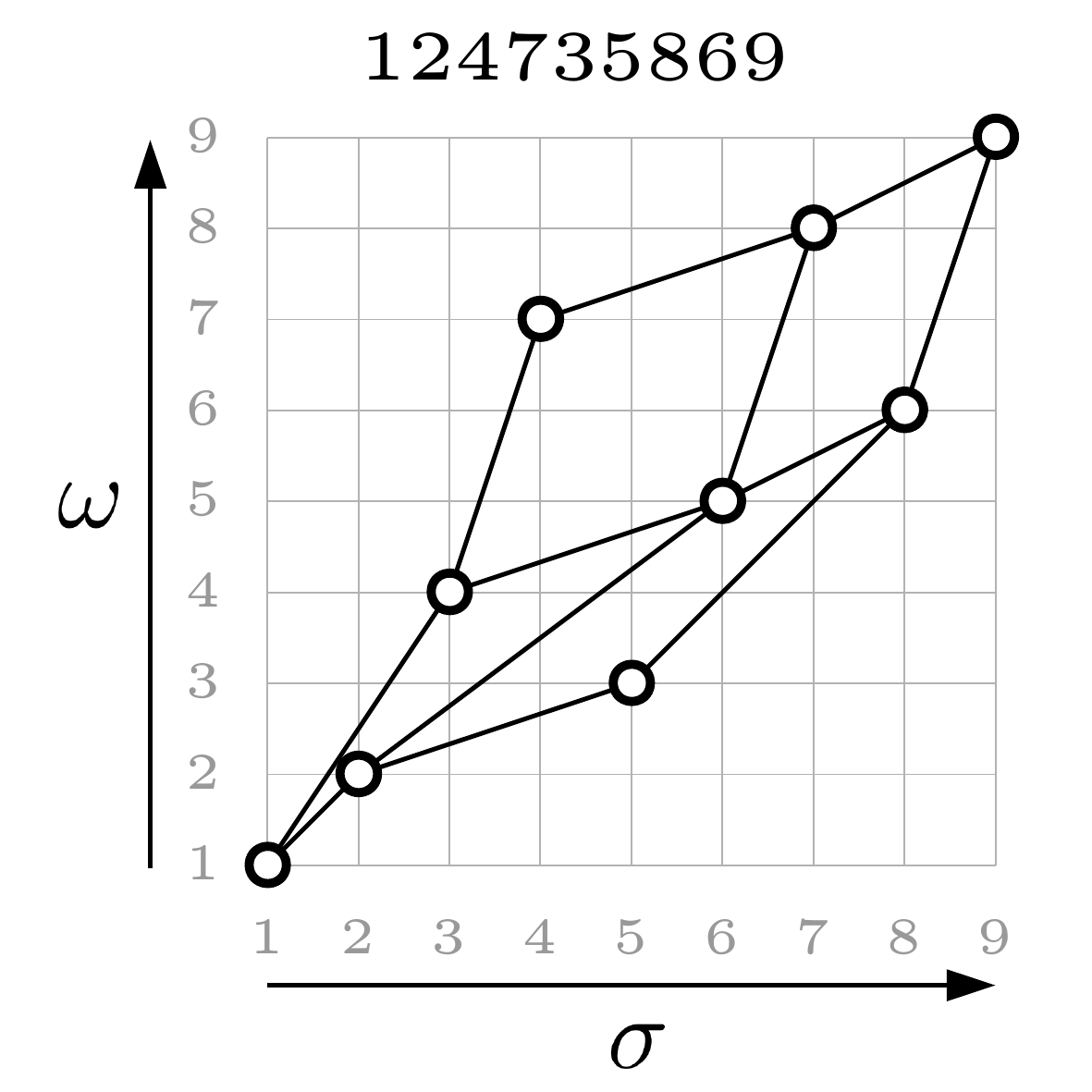}\\
$\,\,\,\,\,\,\,\,\,\,\,\,\,(a)$
\par\end{center}%
\end{minipage}%
\begin{minipage}[t]{0.33\columnwidth}%
\noindent \begin{center}
\includegraphics[scale=0.4]{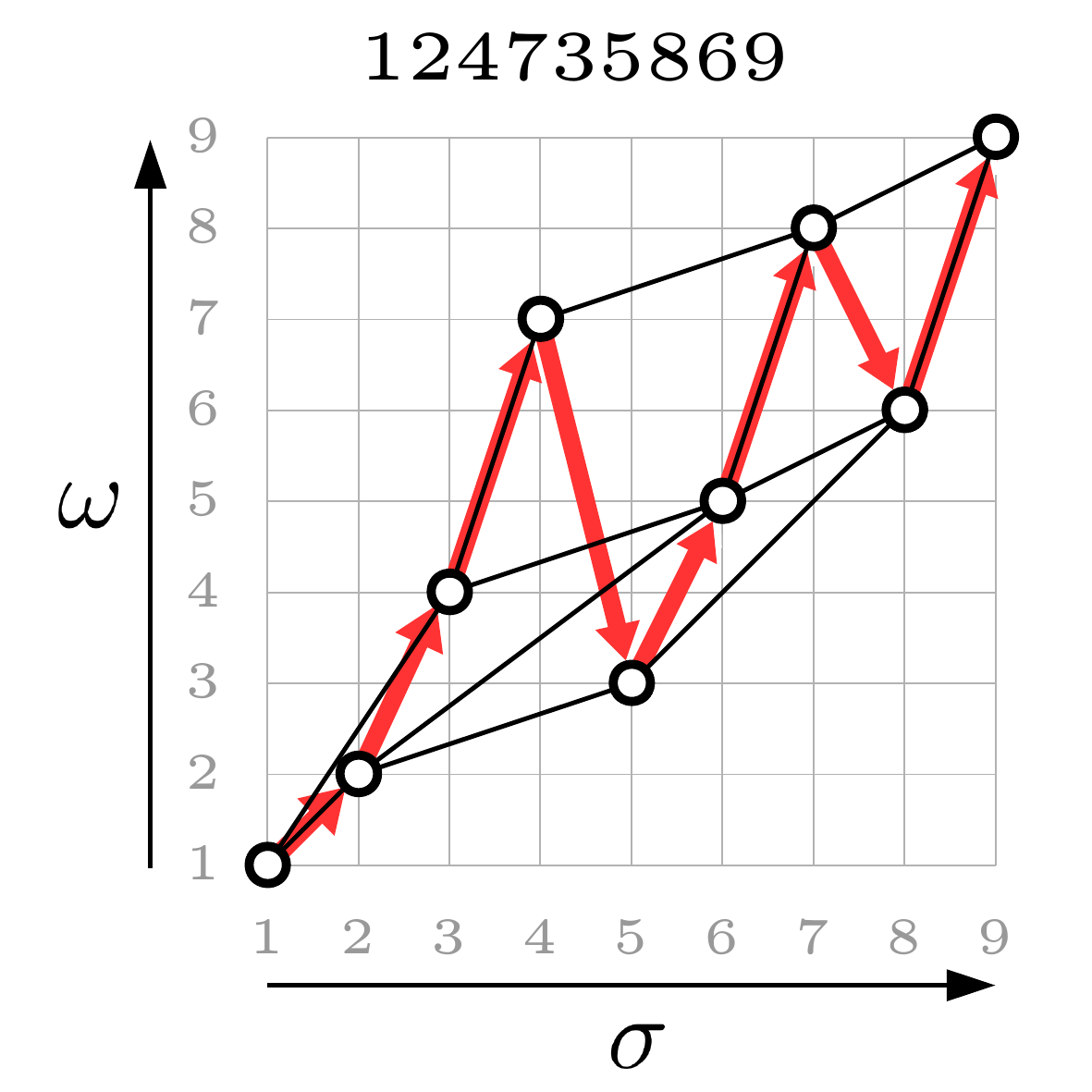}\\
$\,\,\,\,\,\,\,\,\,\,\,\,\,(b)$
\par\end{center}%
\end{minipage}%
\begin{minipage}[t]{0.33\columnwidth}%
\noindent \begin{center}
\includegraphics[scale=0.4]{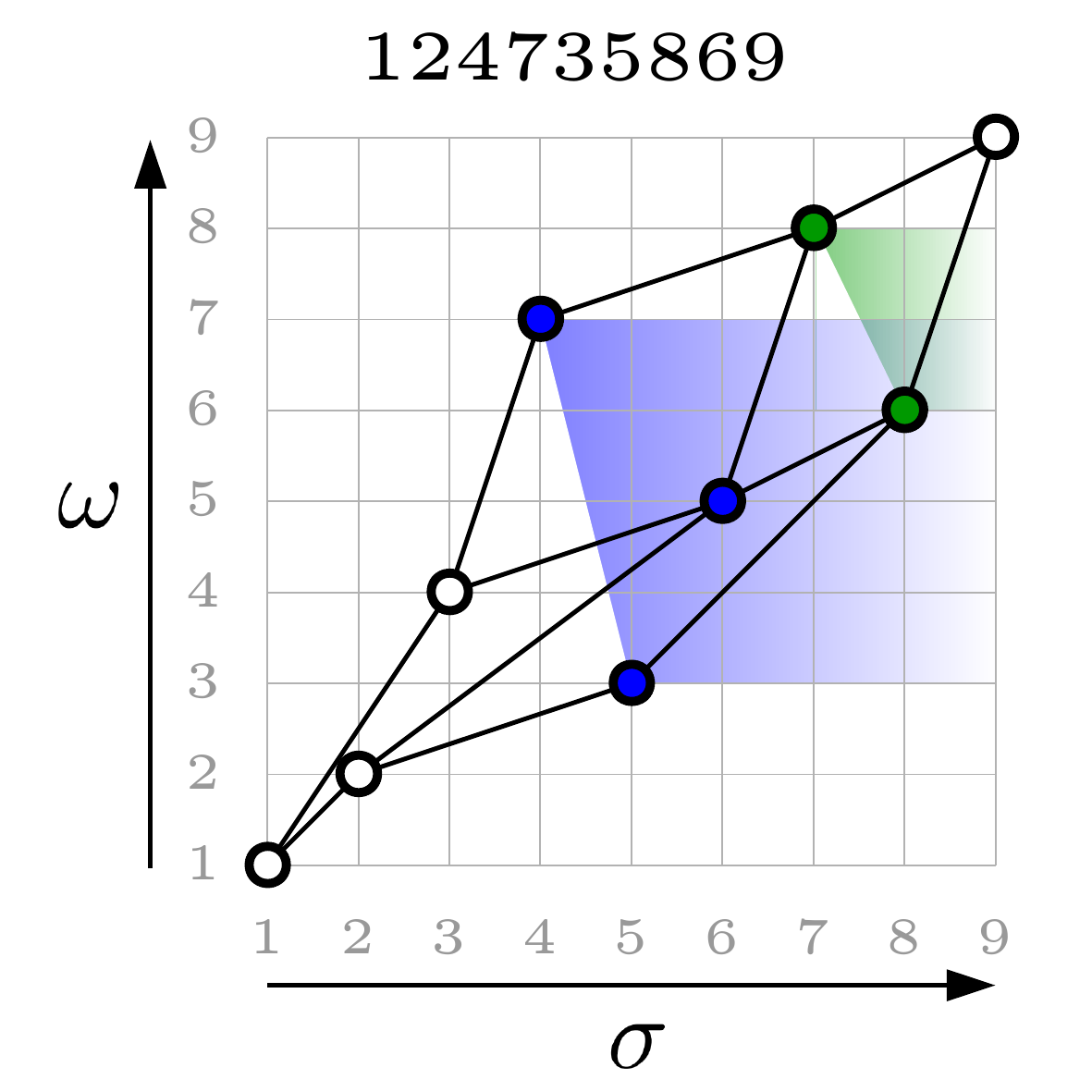}\\
$\,\,\,\,\,\,\,\,\,\,\,\,\,(c)$
\par\end{center}%
\end{minipage}
\par\end{centering}
\caption{\label{fig:Graphical-representation}Graphical representation of the
linear extension $\sigma$ represented by $w=124735869$. $(a)$ Hasse
diagram plotted in a Cartesian plane with each element $t_{i}$ of
$P$ displayed at the coordinates $\left(\sigma(t_{i}),\omega(t_{i})\right)=\left(i,w_{i}\right)$.
$(b)$ Red arrows depict the total order implied by the linear extension
$\sigma$. $(c)$ Graphical identification of elements with fixed
labels (displayed as colored circles): Every descent $i$ fixes the
labels $w_{i}$ and $w_{i+1}$. Further, the line between the two
elements $t_{i}$ and $t_{i+1}$ involved in a descent casts a shadow
to the right, and the labels of any elements caught in the shadow\textemdash and
not covered by other elements inside the same shadow, including $t_{i}$
and $t_{i+1}$\textemdash are also fixed. Elements with deletable
labels remain displayed as white circles.}
\end{figure}
\end{example}

\begin{figure}[H]
\noindent \begin{centering}
\begin{minipage}[t]{0.33\columnwidth}%
\noindent \begin{center}
\includegraphics[scale=0.4]{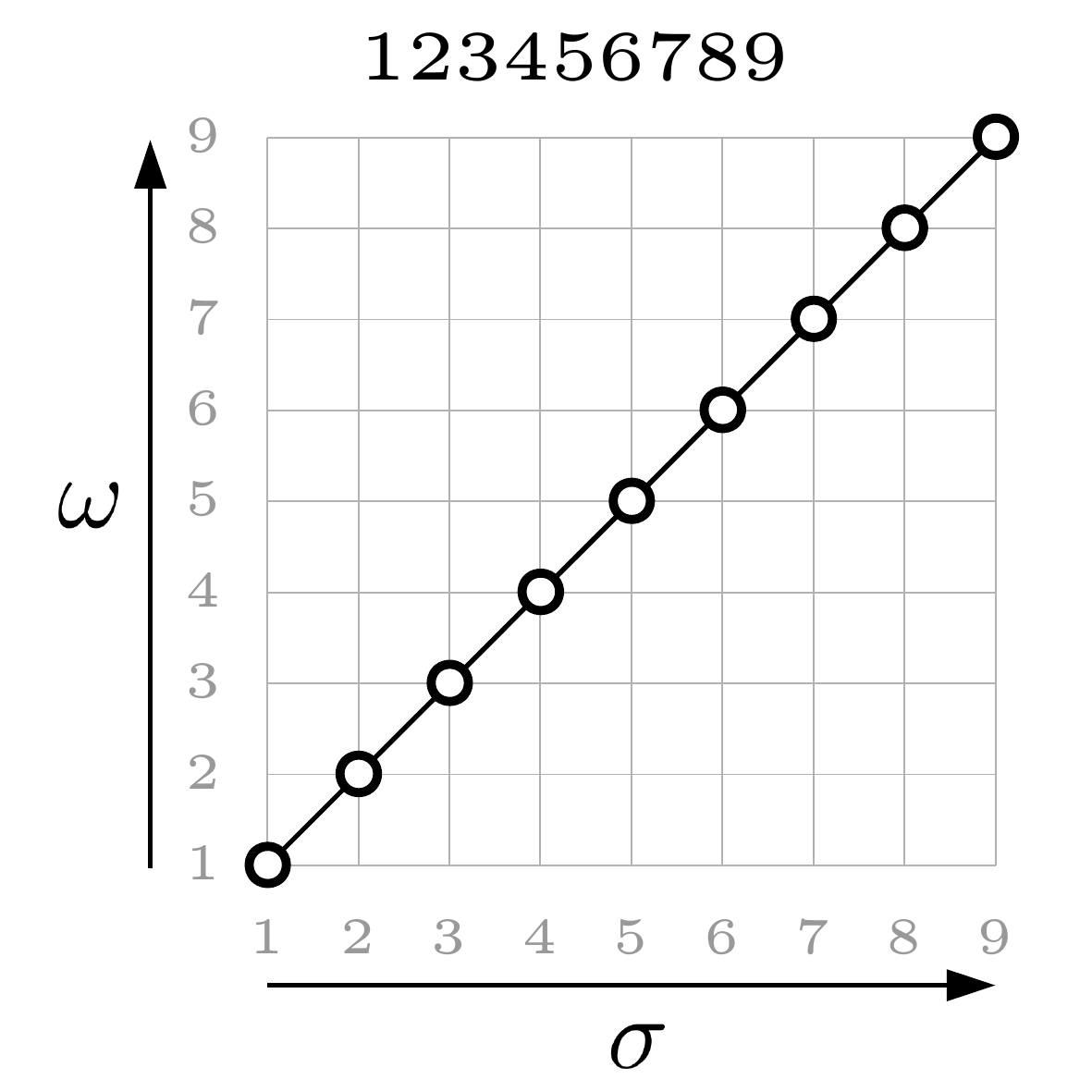}\\
$\,\,\,\,\,\,\,\,\,\,\,\,\,(a)$
\par\end{center}%
\end{minipage}%
\begin{minipage}[t]{0.33\columnwidth}%
\noindent \begin{center}
\includegraphics[scale=0.4]{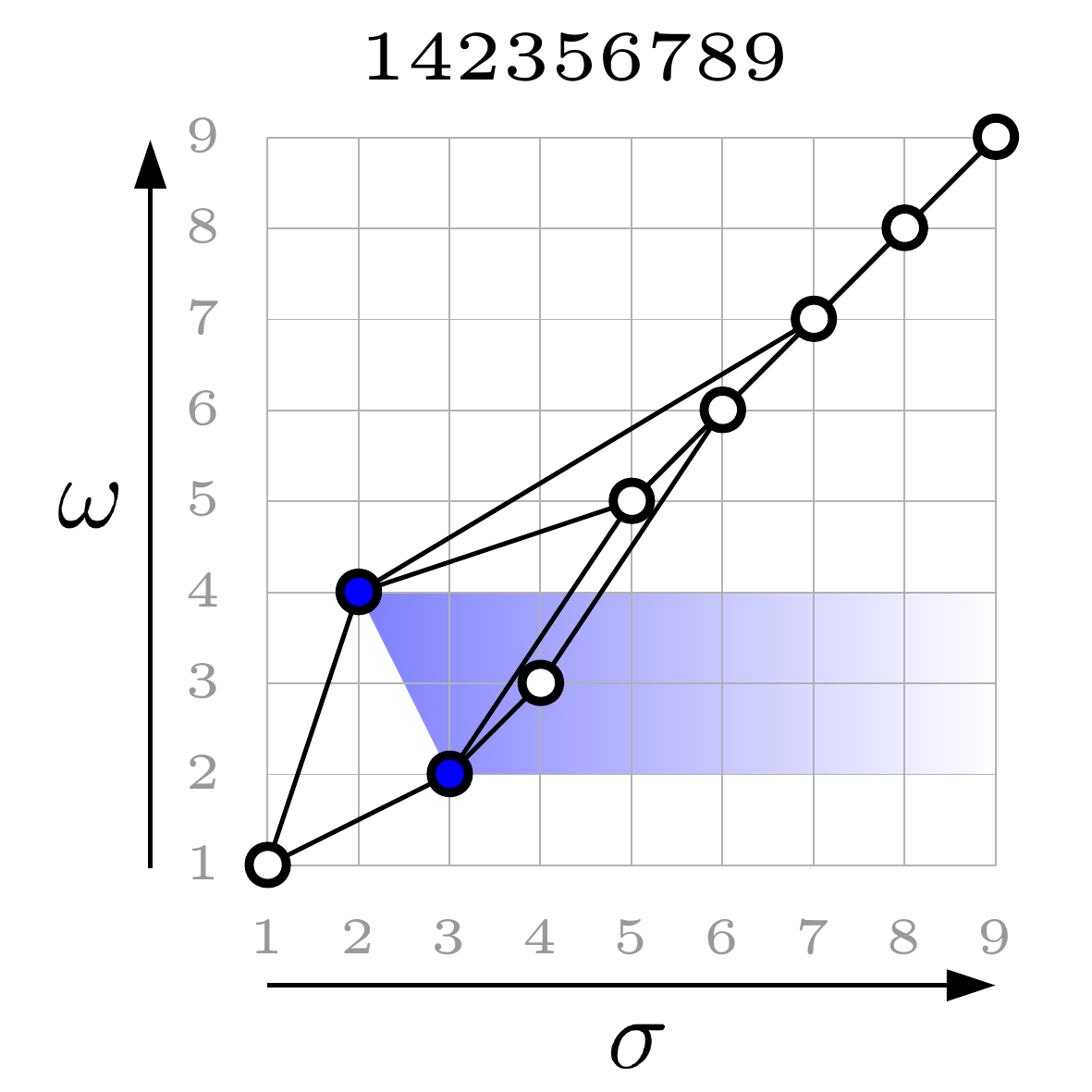}\\
$\,\,\,\,\,\,\,\,\,\,\,\,\,(b)$
\par\end{center}%
\end{minipage}%
\begin{minipage}[t]{0.33\columnwidth}%
\noindent \begin{center}
\includegraphics[scale=0.4]{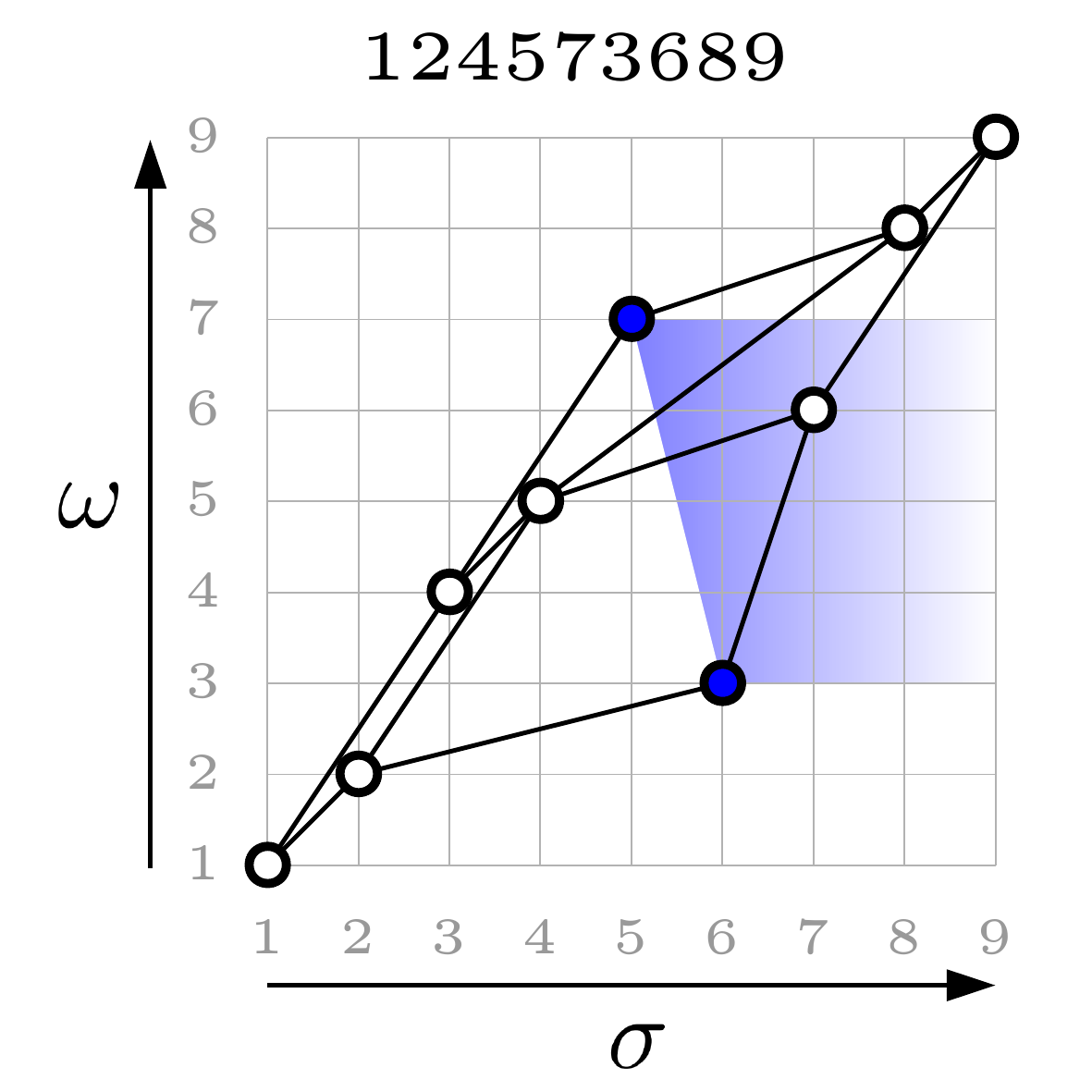}\\
$\,\,\,\,\,\,\,\,\,\,\,\,\,(c)$
\par\end{center}%
\end{minipage}\\
~
\par\end{centering}
\noindent \begin{centering}
\begin{minipage}[t]{0.33\columnwidth}%
\noindent \begin{center}
\includegraphics[scale=0.4]{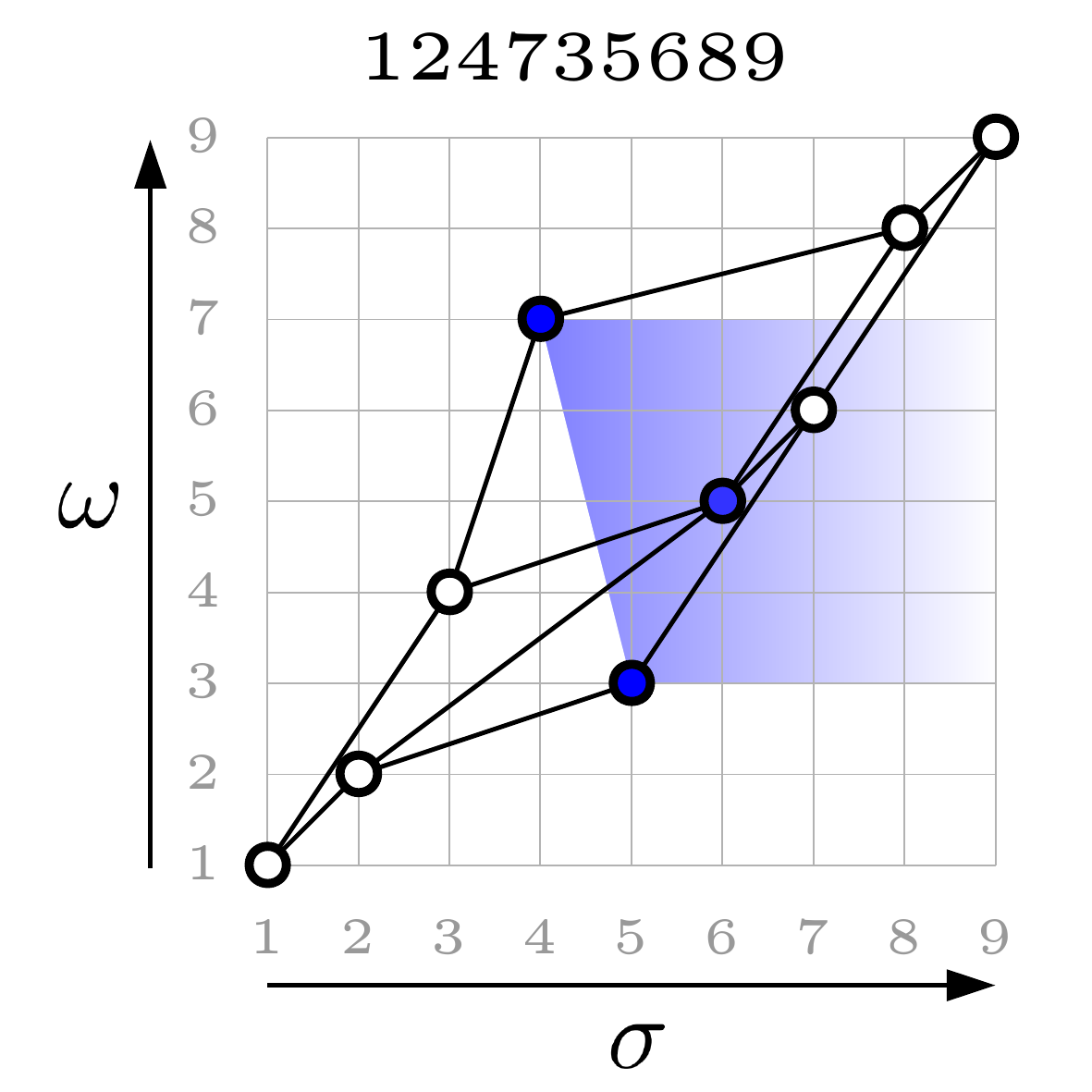}\\
$\,\,\,\,\,\,\,\,\,\,\,\,\,(d)$
\par\end{center}%
\end{minipage}%
\begin{minipage}[t]{0.33\columnwidth}%
\noindent \begin{center}
\includegraphics[scale=0.4]{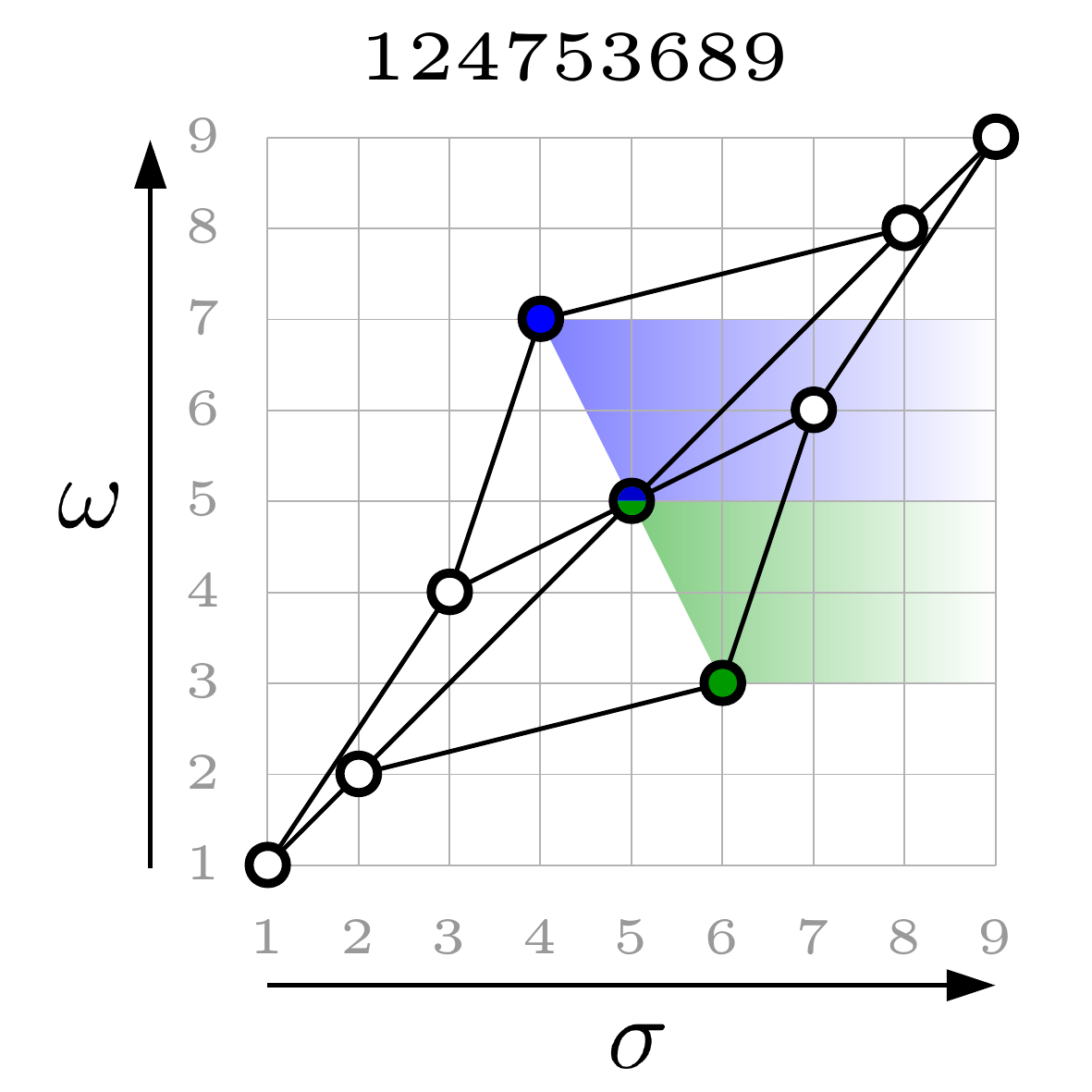}\\
$\,\,\,\,\,\,\,\,\,\,\,\,\,(e)$
\par\end{center}%
\end{minipage}%
\begin{minipage}[t]{0.33\columnwidth}%
\noindent \begin{center}
\includegraphics[scale=0.4]{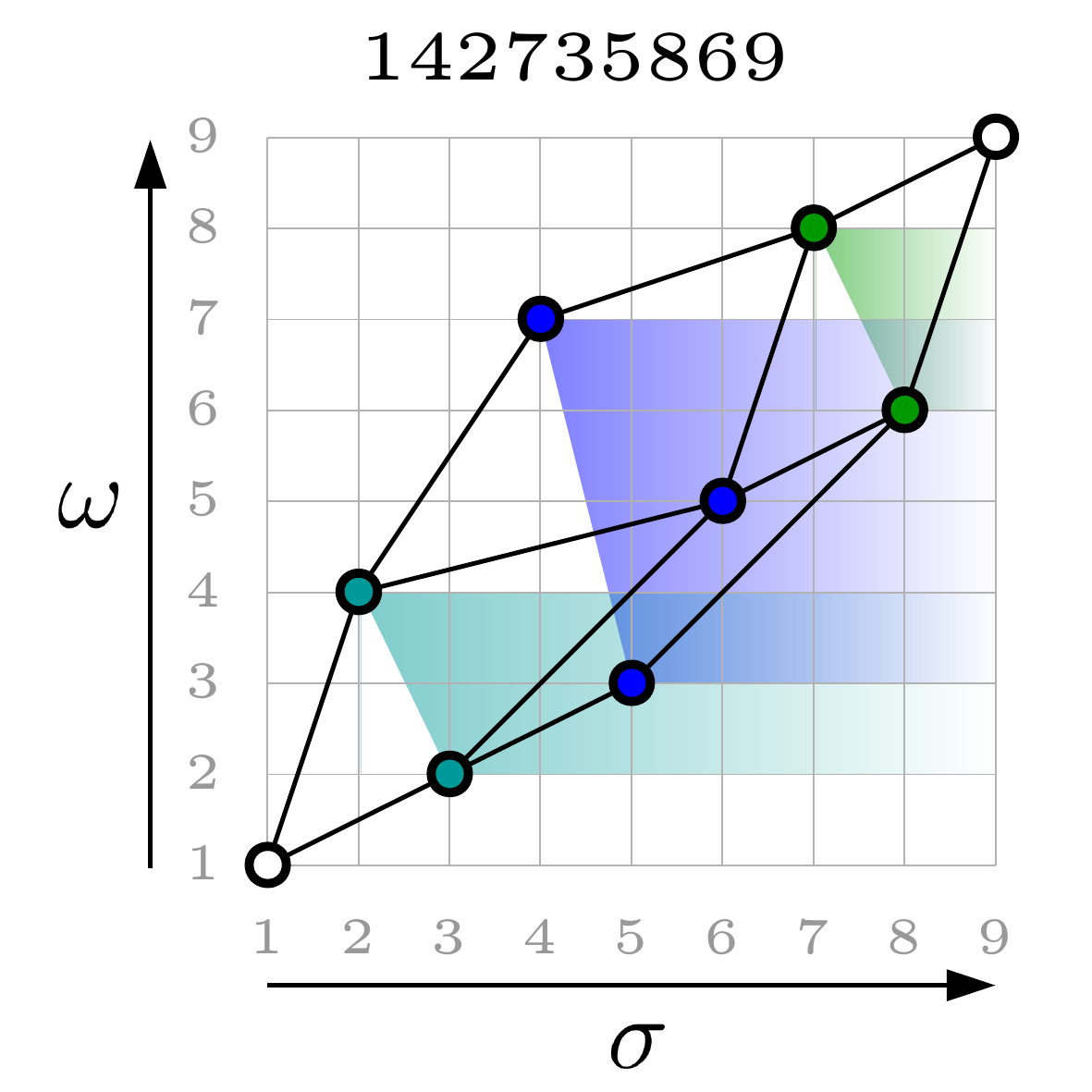}\\
$\,\,\,\,\,\,\,\,\,\,\,\,\,(f)$
\par\end{center}%
\end{minipage}
\par\end{centering}
\caption{\label{fig:six-linear-extensions}Graphical representation of six
linear extensions of the poset $P=\boldsymbol{3}\times\boldsymbol{3}$
shown in Fig.~\ref{fig:3x3 Hasse}. Elements with fixed labels are
shown as colored circles.}
\end{figure}
\begin{example}
Fig.~\ref{fig:six-linear-extensions} shows six of the linear extensions
of the poset $P=\boldsymbol{3}\times\boldsymbol{3}$ shown in Fig.~\ref{fig:3x3 Hasse}.
The introduced graphical representation of each of the linear extensions
allows us to identify easily the sets of deletable labels: $(a)$
$\text{Del}_{P}(w)=[\,9\,]$, $(b)$ $\text{Del}_{P}(w)=[\,9\,]\setminus\left\{ 2,4\right\} $,
$(c)$ $\text{Del}_{P}(w)=[\,9\,]\setminus\left\{ 3,7\right\} $,
$(d)$ $\text{Del}_{P}(w)=[\,9\,]\setminus\left\{ 3,5,7\right\} $,
$(e)$ $\text{Del}_{P}(w)=[\,9\,]\setminus\left\{ 3,5,7\right\} $,
$(f)$ $\text{Del}_{P}(w)=\left\{ 1,9\right\} $. Note that in cases
$(b)-(e)$, there are elements in the shadow which are not colored
because they are covered by another element in the same shadow \textendash{}
that is, elements that are deletable by condition $2b)$ of Definition~\ref{def:deletable}.
\end{example}

We are now ready to investigate formally the relation between the
linear extensions of $P$ and the linear extensions of its subposets
$P\setminus D$.

\subsection{Correspondence between linear extensions of a poset and of its subsets}

Deleting deletable elements does not affect the number of descents:
\begin{lem}
\label{same-nr-descents}Consider a linear extension $w\in\mathcal{L}(P)$
and a subsequence $v=w\setminus D$, where $D\subset\text{\emph{del}}_{P}(w)$.
Then, $\text{\emph{des}}(w)=\text{\emph{des}}(v)$.
\end{lem}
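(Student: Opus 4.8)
The plan is to work directly with the positions that survive the deletion, rather than removing the labels of $D$ one at a time (which would force me to re-examine deletability after each partial deletion). Since $w$ is a permutation, deleting the labels in $D$ amounts to deleting a set of positions; I would write the surviving positions as $i_1 < i_2 < \cdots < i_q$, so that $v = w_{i_1}w_{i_2}\cdots w_{i_q}$, and recall that a descent of $v$ is an index $r$ with $w_{i_r} > w_{i_{r+1}}$. The entire argument rests on condition $1)$ of Definition~\ref{def:deletable} alone: every deleted label sits at a position $m$ with $w_{m-1} < w_m < w_{m+1}$ (with the obvious conventions when $m=1$ or $m=p$), precisely because neither $m-1$ nor $m$ is a descent of $w$.

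First I would show that every descent of $w$ reappears in $v$. If $i$ is a descent of $w$, then $w_i$ fails condition $1)$ (position $i$ is a descent) and is therefore fixed; likewise $w_{i+1}$ is fixed, since position $(i+1)-1 = i$ is a descent. Hence neither $w_i$ nor $w_{i+1}$ lies in $D \subset \text{Del}_P(w)$, both positions survive, and they stay adjacent in $v$, contributing a descent of $v$. Second, I would show that no descent of $v$ can arise across a gap of deleted labels. Consider consecutive survivors $i_r < i_{r+1}$ with $i_{r+1} > i_r+1$; every intermediate position $m$ with $i_r < m < i_{r+1}$ is deleted, hence ascending, so by condition $1)$ each of the pairs $(w_{i_r},w_{i_r+1}), (w_{i_r+1},w_{i_r+2}), \ldots, (w_{i_{r+1}-1},w_{i_{r+1}})$ is an ascent. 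Chaining these inequalities gives $w_{i_r} < w_{i_{r+1}}$, so the pair $(w_{i_r},w_{i_{r+1}})$ is not a descent of $v$. Consequently a descent of $v$ can occur only between immediately adjacent survivors, where $i_{r+1} = i_r+1$ and the pair coincides with a descent of $w$ at position $i_r$.

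Combining the two steps yields a bijection between the descents of $w$ and the descents of $v$: each descent of $w$ has both endpoints fixed and adjacent, so it maps to a descent of $v$; conversely every descent of $v$ comes from an adjacent surviving pair that is already a descent of $w$; these two maps are mutually inverse, whence $\text{des}(w)=\text{des}(v)$. The only delicate point is the gap-bridging step, i.e. verifying that a maximal run of consecutive deleted labels is strictly increasing and therefore ``lifts'' its left survivor strictly below its right survivor; this is exactly where condition $1)$ of deletability is used (and nothing from condition $2)$ is needed), and it is also where the boundary conventions for $m=1$ and $m=p$ must be checked, so that deletions at the very start or end of $w$ merely remove an ascending pair without creating a new descent.
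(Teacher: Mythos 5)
Your proof is correct and takes essentially the same route as the paper's: both arguments rest solely on condition $1)$ of Definition~\ref{def:deletable}, showing that every maximal block of deleted (resp.\ deletable) labels together with its two flanking labels forms a strictly increasing run, so deletion neither destroys a descent (both of its labels are fixed, survive, and remain adjacent) nor creates one (chained ascents bridge every gap). The paper phrases this with sentinels $w_0=0$, $w_{p+1}=p+1$ and maximal runs of deletable labels between fixed labels, whereas you work with gaps between consecutive surviving positions and make the descent bijection explicit; the difference is cosmetic.
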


\begin{proof}
Let us augment the linear extension $w$ with two auxiliary fixed
labels $w_{0}=0$ and $w_{p+1}=p+1$. Then any deletable label of
$w$ is located between two fixed labels $w_{i}$ and $w_{j}$, which
can be selected in such a way that all the labels $w_{i+1},\ldots,w_{j-1}$
in between are deletable. If there is any $k\in(i,j)$ such that $w_{k}>w_{k+1}$,
$k$ would be a descent in $w$ and $w_{k}$ and $w_{k+1}$ would
be fixed according to condition $1)$ of Def.~\ref{def:deletable},
contradicting the choice of $w_{i}$ and $w_{j}$. Therefore, we have
$w_{i}<w_{i+1}<\ldots<w_{j-1}<w_{j}$. Every deletable element belongs
to such an interval containing monotonously increasing deletable labels
flanked by two fixed labels, therefore constructing $v=w\setminus D$
by deleting any deletable elements from $w$ does not remove or introduce
any descents.
\end{proof}
The following two lemmata establish the correspondence between the
elements of $\mathcal{L}(P\setminus D)$ and the elements of $\mathcal{L}(P)$
.
\begin{lem}
\label{Equiv>}Let $D\subset[\,p\,]$, and let $Q=P\setminus D$ be
a subposet of $P$. For every linear extension $v$ of $Q$ there
exists exactly one linear extension $w$ of $P$ such that $v=w\setminus D$
and $D\subset\text{\emph{del}}_{P}(w)$.
\end{lem}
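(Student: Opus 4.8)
The plan is to prove both existence and uniqueness of the linear extension $w\in\mathcal{L}(P)$ satisfying $v=w\setminus D$ with $D\subset\text{del}_{P}(w)$. Given a linear extension $v$ of $Q=P\setminus D$, I first need to reinsert the deleted labels of $D$ back into $v$ so as to recover a genuine linear extension of $P$ in which every reinserted label is deletable. The natural construction, suggested by the remark following Definition~\ref{def:deletable} that deletable elements occupy the position ``as early as possible without interfering with the descent pattern,'' is to insert each label $d\in D$ at the earliest position consistent with the partial order $<_{P}$ and with not creating any new descent. Concretely, I would process the labels of $D$ one at a time and place each at the leftmost slot $i$ such that all elements already to its left that are $<_{P}$-below it remain to its left, and such that both neighbors produce an ascent; I then argue this slot exists and is unique.

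First I would show the construction produces a valid $w\in\mathcal{L}(P)$: the insertion respects $<_{P}$ by design, and since $v$ is order-preserving on $Q$, the augmented sequence is order-preserving on all of $P$, hence a linear extension. Next I would verify that each inserted label is deletable in $w$, i.e. satisfies Definition~\ref{def:deletable}: placing a label at the earliest non-descent-creating position guarantees condition $1)$ (neither adjacent index is a descent), while the ``earliest admissible'' choice forces either condition $2a)$ (all preceding labels are smaller, when nothing below it sits to the left) or condition $2b)$ (there is a $<_{P}$-predecessor to its left with only smaller labels in between). I would also invoke Lemma~\ref{same-nr-descents} to confirm that deleting $D$ from this $w$ returns exactly $v$ and does not disturb the descent structure, so that indeed $v=w\setminus D$.

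The harder half is \emph{uniqueness}: I must show no other $w'\in\mathcal{L}(P)$ with $v=w'\setminus D$ and $D\subset\text{del}_{P}(w')$ exists. Suppose $w$ and $w'$ both qualify. Since $w\setminus D=v=w'\setminus D$, they agree on the relative order of all fixed (non-$D$) labels, so they can differ only in the positions assigned to the labels of $D$. The key claim is that deletability pins down these positions uniquely. This is exactly the content of the parenthetical observation in the text: a deletable label reinserted at any earlier position would either violate $<_{P}$, create a new descent (changing which labels lie in descent pairs), or fail conditions $2a)/2b)$. I would argue that if $w$ and $w'$ placed some $d\in D$ at different positions, then in one of them $d$ could not be deletable — contradicting $D\subset\text{del}_{P}(w)\cap\text{del}_{P}(w')$. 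Formally, I expect to proceed by induction on $\#D$, or by showing directly that among all linear extensions of $P$ restricting to $v$, the deletability constraint selects the unique one in which every label of $D$ sits as far left as its order and descent constraints permit.

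The main obstacle is making the uniqueness argument fully rigorous: I must carefully show that the ``earliest admissible position'' is well-defined even when several labels of $D$ interact (one deletable label may lie in the shadow of another), and that the deletability of \emph{every} label of $D$ simultaneously forces a single global placement rather than merely constraining each label locally. Handling these interactions — in particular ensuring that the graphical ``shadow'' covering described in Example~\ref{exa:le_graph} does not permit two genuinely distinct deletable placements — is where I expect the delicate casework to concentrate, and I would lean on condition $2b)$ and Lemma~\ref{same-nr-descents} to keep the descent pattern invariant throughout the reinsertion.
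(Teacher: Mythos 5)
Your overall plan coincides with the paper's: insert every $d\in D$ into $v$ ``as early as possible'' subject to the order of $P$ and to not creating descents, then argue that the deletability requirement forces this placement, giving uniqueness. However, the one concrete step you offer for the uniqueness half is wrong as stated, and it is exactly the step where the real work lies. You claim that if two admissible extensions $w$ and $w'$ place some $d\in D$ at different positions, then ``in one of them $d$ could not be deletable.'' This is false in general: if $d$ is placed later than its earliest admissible slot, $d$ can still be deletable via condition $2b)$ of Definition~\ref{def:deletable}, provided some \emph{other} element of $D$ that is a $<_{P}$-predecessor of $d$ is inserted between that slot and $d$ (this is precisely the ``one deletable label lies in the shadow of another'' interaction you flag at the end but never resolve). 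So the contradiction cannot be localized at $d$ itself, and an induction on $\#D$ of the kind you gesture at does not obviously go through, since the obstruction is inherently about several elements of $D$ shielding one another.

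The paper closes this gap with a specific argument you would need to reproduce or replace: assuming $d$ sits later than its minimal slot $i_{d}$, any witness $e$ for $d$'s deletability (an element $<_{P}$-below $d$ appearing between $v_{i_{d}+1}$ and $d$) must itself belong to $D$ — a witness outside $D$ would occur in $v$ at a position $k\leq j_{d}\leq i_{d}$, contradicting that it follows $v_{i_{d}+1}$. Then, taking $c=\min E$ over the set $E$ of all such witnesses, one shows $c$ cannot be deletable: a witness $e'$ for $c$ would have to lie in $E$, forcing $c<e'$, while the natural labeling forces $e'<c$ from $\omega^{-1}(e')<_{P}\omega^{-1}(c)$. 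The contradiction thus lands on $c\in D$, not on $d$, which is what actually rules out every non-minimal placement. Two smaller issues: inserting the elements of $D$ ``one at a time'' makes the construction order-dependent, which you never address — the paper avoids this by defining all target slots $i_{d}$ simultaneously from $v$ alone via Eqs.~(\ref{eq:jd}) and (\ref{eq:id}); and Lemma~\ref{same-nr-descents} does not give you $v=w\setminus D$ (that holds by construction) — it is only needed later, in the proof of Theorem~\ref{Extended-order-polynomial}, to equate descent counts.
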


\begin{proof}
Denote in the following by $q=\#Q=p-\#D$ the length of $v$. For
the sake of brevity of the following expositions, assume during this
proof that $v_{0}=0$ and $v_{q+1}=p+1$. Let us attempt to construct
a sequence $w$ of elements of $\left[\,p\,\right]$ such that
\begin{lyxlist}{00.00.0000}
\item [{$a)$}] $v=w\setminus D$,
\item [{$b)$}] $w\in\mathcal{L}(P)$,
\item [{$c)$}] $D\subset\text{Del}_{P}(w)$.
\end{lyxlist}
A sequence $w$ can only satisfy condition $a)$ if it contains the
labels $v_{1},\ldots,v_{q}$ appearing in the same order as in $v$,
preceded, interleaved, and/or succeeded by the elements of $D.$ Let
us denote by $D^{0}$ the set of elements of $D$ appearing in $w$
before $v_{1}$, by $D^{1}$ the set of elements appearing between
$v_{1}$ and $v_{2}$, and so on. Clearly, $D$ is a disjoint union
of the subsets $D^{0},D^{1},\ldots,D^{q}$. We can thus construct
a sequence $w$ in two steps:
\begin{lyxlist}{00.00.0000}
\item [{Step~1:}] Partition $D$ into $q+1$ (possibly empty) subsets
$D^{0},D^{1},\ldots,D^{q}$ containing $m_{0},m_{1},\ldots,m_{q}$
elements, respectively.
\item [{Step~2:}] Arrange the elements of each subset $D^{i}$ into a
subsequence $d_{1}^{i}d_{2}^{i}\ldots d_{m_{i}}^{i}$ and form a sequence
$w$ by concatenating the labels in $v$ and the consecutive subsequences
$d_{1}^{0}d_{2}^{0}\ldots d_{m_{0}}^{0},\ldots,d_{1}^{q}d_{2}^{q}\ldots d_{m_{q}}^{q}$
in the following way
\[
w=d_{1}^{0}d_{2}^{0}\ldots d_{m_{0}}^{0}v_{1}d_{1}^{1}d_{2}^{1}\ldots d_{m_{1}}^{1}v_{2}\ldots v_{q}d_{1}^{q}d_{2}^{q}\ldots d_{m_{q}}^{q}.
\]
\end{lyxlist}
\noindent Obviously, many different sequences $w$ can be constructed
in this way by choosing different partitionings of $D$ and by selecting
distinct orders of the elements in each $D^{i}$; we show during the
following construction process that the conditions $b)$ and $c)$
restrict this abundance to a single, unique sequence $w$. 

\noindent Every $d\in D$ must be inserted in such a way that that
$w_{i-1}<w_{i}\equiv d<w_{i+1}$, otherwise, $d$ would\textemdash by
condition $1)$ of Definition~\ref{def:deletable}\textemdash not
be deletable from $w$, thus violating condition $c)$. Therefore,
the subsequence $d_{1}^{i}d_{2}^{i}\ldots d_{m_{i}}^{i}$ inserted
between $v_{i}$ and $v_{i+1}$ must satisfy $v_{i}<d_{1}^{i}<d_{2}^{i}<\ldots<d_{m_{i}}^{i}<v_{i+1}$.
This shows that, in Step 2, when we augment $v$ with the elements
of a subset $D^{i}$, the only choice is to arrange these elements
into a monotonously increasing sequence before doing so. Moreover,
this requirement seriously reduces the number of allowed partitions
of $D$ into subsets $D^{i}$, as each $d\in D^{i}$ needs to satisfy
the condition $v_{i}<d<v_{i+1}$.

Consider an element $d\in D$. Let us now narrow down the family of
subsets $D^{i}$ into which the element $d$ may be placed. Let $j_{d}=\text{max}\left\{ j\in[\,q\,]\,\vert\,\omega^{-1}(v_{j})<_{P}\omega^{-1}(d)\right\} $,
or $j_{d}=0$ if this set is empty. In order to not violate condition
$b)$, $d$ must be in some $D^{i}$ with $j_{d}\leq i$. Denote by
$I_{d}=\left\{ i\,\vert\,i\geq j_{d},\,v_{i}<d<v_{i+1}\right\} $
the set of possible choices for $i$ limited by the so far derived
conditions $i\geq j_{d}$ and $v_{i}<d<v_{i+1}$. The set $I_{d}$
is nonempty: It follows from the order-preserving nature of $\omega$
that $v_{j_{d}}<d<v_{q+1}$, so there must be at least one value of
$i$ with $j_{d}\leq i\leq q$ such that $v_{i}<d<v_{i+1}$. Let $i_{d}=\text{min }I_{d}$.

We will now show by \emph{reductio ad absurdum} that placing $d$
into a subset other than $D^{i_{d}}$ leads to a violation of condition
$c)$. (Recollect that every deletable element appears in $w$ ,,as
early as possible''.) Assume that $d\in D^{i}$ with $i\in I_{d}$
and $i>i_{d}$. Then, by definition of $I_{d}$, we know that $d<v_{i_{d}+1}$.
In order for $d$ to be deletable from the sequence $w$, there must
be an element $e\in[\,p\,]$ such that $\omega^{-1}(e)<_{P}\omega^{-1}(d)$
and which appears in $w$ between $v_{i_{d}+1}$ and $d$. Denote
by $E$ the set of such elements: $E=\left\{ e\in[\,p\,]\,\vert\,\omega^{-1}(e)<_{P}\omega^{-1}(d),\sigma(\omega^{-1}(v_{i_{d}+1}))<\sigma(\omega^{-1}(e))<\sigma(\omega^{-1}(d))\right\} $,
where $\sigma$ denotes the map $\sigma:P\rightarrow\boldsymbol{p}$,
$w_{i}\mapsto i$ implied by $w$. If there is an $e\in E$ with $e\notin D$,
then $e$ must appear in $v$ at some position $k$, $e\equiv v_{k}$.
Since $\omega^{-1}(e)<\omega^{-1}(d)$, according to the definitions
of $j_{d}$ and $i_{d}$ we find that $k\leq j_{d}\leq i_{d}$, in
contradiction with the requirement that $v_{i_{d}+1}$ precede $e=v_{k}$
in $v$. Therefore, $e\in D$ and thus $E\subset D$. Consider now
the element $c=\text{min }E$. It follows from $\omega^{-1}(c)<_{P}\omega^{-1}(d)$
that $c<d<v_{i_{d}+1}$. In order for $c$ to be deletable from the
finished sequence $w$, there must be an element $e'\in[\,p\,]$ such
that $\omega^{-1}(e')<_{P}\omega^{-1}(c)$ and which appears in $w$
between $v_{i_{d}+1}$ and $c$. Because of $\omega^{-1}(c)<\omega^{-1}(d)$
and since $c$ must precede $d$ in $w$, the aforementioned element
$e'$ must be in $E$, and therefore $\text{min }E=c<e'$. At the
same time, since $\omega$ is a natural labeling, $\omega^{-1}(e')<_{P}\omega^{-1}(c)$
implies $e'<c$. This contradiction shows that $c$ cannot be deletable
from $w$, $c\notin\text{Del}_{P}(w)$. However we have found before
that $c\in D$, which means that the assumption $i>i_{d}$ leads to
a violation of condition $c)$. Therefore, we must have $i\leq i_{d}$.
Since $i_{d}$ is defined as the minimum allowed value of $i,$ we
have $i=i_{d}$.

\noindent To summarize, we have shown until now that the only way
to construct a sequence $w$ in a way that does not contradict conditions
$a)-c)$ is to follow the construction introduced above, which can
be described in the following way:
\begin{lyxlist}{00.00.0000}
\item [{Step~1:}] For every $d\in D$, let
\begin{eqnarray}
\phantom{\text{and }}j_{d} & = & \text{max}\left(\left\{ j\in[\,q\,]\,\vert\,\omega^{-1}(v_{j})<_{P}\omega^{-1}(d)\right\} \cup\left\{ 0\right\} \right)\label{eq:jd}\\
\text{and }i_{d} & = & \text{min}\left(\left\{ i\,\vert\,j_{d}\leq i\leq q,\,v_{i}<d<v_{i+1}\right\} \right),\label{eq:id}
\end{eqnarray}
and assign $d$ to the set $D^{i_{d}}$.
\item [{Step~2:}] For every $0\leq i\leq q$, insert between $v_{i}$
and $v_{i+1}$ the elements of $D^{i}$ in increasing order: 
\[
w=d_{1}^{0}d_{2}^{0}\ldots d_{m_{0}}^{0}v_{1}d_{1}^{1}d_{2}^{1}\ldots d_{m_{1}}^{1}v_{2}\ldots v_{q}d_{1}^{q}d_{2}^{q}\ldots d_{m_{q}}^{q},
\]
where $d_{q}^{i}\in D^{i}$ and $v_{i}<d_{1}^{i}<d_{2}^{i}<\ldots<d_{m_{i}}^{i}<v_{i+1}$.
\end{lyxlist}
\noindent It remains to be demonstrated that the sequence $w$ uniquely
defined in this way indeed satisfies all conditions $a)-c)$. Condition
$a)$ is satisfied by construction.

Next, let us verify that $w$ satisfies condition $b)$. Consider
two arbitrary elements $s,t\in P$ such that $s<_{P}t$. Each of their
labels $\omega(s)$ and $\omega(t)$ can be in $D$ or in $[\,p\,]\setminus D$.
For each case, we have to show that $\omega(s)$ precedes $\omega(t)$
in $w$.
\begin{itemize}
\item If $\omega(s),\omega(t)\in[\,p\,]\setminus D$, then $\omega(s)\equiv v_{i}$
and $\omega(t)\equiv v_{j}$ for some $i,j\in[\,q\,]$. Since $Q$
is an induced subposet of $P$, we have $s<_{Q}t$, and therefore
we know that $i<j$, i.e., $v_{i}\equiv\omega(s)$ precedes $v_{j}\equiv\omega(t)$
in $v$. Then, by construction, $\omega(s)$ precedes $\omega(t)$
also in $w$.
\item If $\omega(s)\in[\,p\,]\setminus D$ and $\omega(t)\in D$, then $\omega(s)\equiv v_{k}$
for some $k\in[\,q\,]$. Step 1 defines two numbers $j_{\omega(t)}$
and $i_{\omega(t)}$. Since $s<_{P}t$, $k$ is in $\left\{ j\in[q]\,\vert\,\omega^{-1}(v_{j})<_{P}t\right\} $,
and thus by Eq.~(\ref{eq:jd}) $k\leq j_{\omega(t)}$. From Eq.~(\ref{eq:id})
it is clear that $j_{\omega(t)}\leq i_{\omega(t)}$. Consequently,
the label $\omega(t)$ is assigned to $D^{i_{\omega(t)}}$ with $k\leq i_{\omega(t)}$,
which means that $\omega(t)$ appears in $w$ after $\omega(s)\equiv v_{k}$.
\item If $\omega(s)\in D$ and $\omega(t)\in[\,p\,]\setminus D$, then $\omega(t)\equiv v_{k}$
for some $k\in[\,q\,]$. Any $v_{l}\in[\,p\,]\setminus D$ with $\omega^{-1}(v_{l})<_{P}s$
also satisfies $\omega^{-1}(v_{l})<_{P}s<_{P}t=\omega^{-1}(v_{k})$,
and therefore $l<k$. Therefore, application of Step~$1$ to $\omega(s)$
results in $j_{\omega(s)}<k$ and, due to the fact that $\omega(s)<\omega(t)$,
we have $i_{\omega(s)}<k$. Thus, $\omega(s)$ is assigned to a $D^{i_{\omega(s)}}$
with $i_{\omega(s)}<k$, and therefore it appears in $w$ before $\omega(t)$.
\item If $\omega(s),\omega(t)\in D$, then for any $v_{k}$ with $\omega^{-1}(v_{k})<_{P}s$,
it follows directly that $\omega^{-1}(v_{k})<_{P}t$. Therefore, in
Step~$1$, we find $j_{\omega(s)}\leq j_{\omega(t)}$, and as a result,
in addition to $j_{\omega(s)}\leq i_{\omega(s)}$ (due to Eq.~(\ref{eq:id}))
we also know that $j_{\omega(s)}\leq i_{\omega(t)}$. By construction
of $j_{\omega(s)}$ and $i_{\omega(t)}$ as well as the order-preserving
nature of $\omega$, we find $v_{j_{\omega(s)}}<\omega(s)<\omega(t)<v_{i_{\omega(t)}+1}$.
Therefore, there must be at least one value of $i$ in the interval
$j_{\omega(s)},\ldots,i_{\omega(t)}$ such that $v_{i}<\omega(s)<v_{i+1}$.
It follows that $i_{\omega(s)}\leq i_{\omega(t)}$. If $i_{\omega(s)}<i_{\omega(t)}$,
then obviously $\omega(s)$ appears in $w$ before $v_{i_{\omega(t)}}$,
which in turn appears before $\omega(t)$. Finally, even if $i_{\omega(s)}=i_{\omega(t)}$,
in Step~$2$ the elements of each $D^{i}$ are inserted into the
sequence $w$ in increasing order, so in any case $\omega(s)$ will
be inserted before $\omega(t)$.
\end{itemize}
We have shown that the constructed sequence $w$ satisfies the condition
$b)$, $w\in\mathcal{L}(P)$.

Finally let us verify that $w$ satisfies condition $c)$. Consider
an element $d\in D$ which is inserted into $w$ at some position
$k$, thus $d\equiv w_{k}$. We have ensured during the construction
process that $w_{k-1}<d\equiv w_{k}<w_{k+1}$, therefore condition
$1)$ of Def.~\ref{def:deletable} is satisfied. If $w_{j}<w_{k}$
for all $j\in(0,k)$, then condition $2a)$ of Def.~\ref{def:deletable}
is satisfied and $d$ is deletable in $w$. Otherwise, the set $L=\left\{ i\,\vert\,i<k\text{ and }w_{i}>w_{k}\right\} $
is nonempty. Let $l=\text{max }L$. It is clear that $w_{i}<w_{k}$
for all $i\in(l,k)$. Thus, if we can find a value of $j\in(l,k)$
such that $\omega^{-1}(w_{j})<_{P}\omega^{-1}(w_{k})$, then condition
$2b)$ of Def.~\ref{def:deletable} is satisfied. We show below that
indeed this is the case.

It follows directly from the definition of $l$ that $w_{l}>w_{k}$
and simultaneously $w_{k}>w_{l+1}$ (since $l+1\in\left(l,k\right)$);
therefore $w_{l}>w_{l+1}$, and $l$ is a descent. We already have
seen that condition $1)$ of Def.~\ref{def:deletable} is satisfied
for all the elements of $D$; therefore $w_{l}$ and $w_{l+1}$ are
not in $D$, but appear somewhere in the original sequence $v$, in
the form $w_{l}\equiv v_{\tilde{l}}$ and $w_{l+1}\equiv v_{\tilde{l}+1}$
for some $\tilde{l}\in[\,q\,]$. Since $l+1<k$, during Step~1 $d$
must have been assigned to some $D^{i_{d}}$ with $\tilde{l}<i_{d}$. 

Let us assume that $j_{d}<\tilde{l}$; we will see that this assumption
leads to a contradiction. From $\omega^{-1}(v_{j_{d}})<_{P}\omega^{-1}(d)$
it follows that $v_{j_{d}}<d$. By construction of $\tilde{l}$, we
have $d<v_{\tilde{l}}$. Therefore, if $j_{d}<\tilde{l}$, then there
must be a $i\in(j_{d}-1,\tilde{l})$ such that $v_{i}<d<v_{i+1}$.
Then, by definition of $i_{d}$, we would have $i_{d}\leq i<\tilde{l}$,
in contradiction with $\tilde{l}<i_{d}$. Therefore, the assumption
$j_{d}<\tilde{l}$ made at the beginning of this paragraph must be
wrong and we have $\tilde{l}\leq j_{d}$. Since $v_{\tilde{l}}>d$,
we find $\omega^{-1}(v_{\tilde{l}})\nless_{P}\omega^{-1}(d)$, and
therefore (by Eq.~(\ref{eq:jd})) $j_{d}\neq\tilde{l}$. This reasoning
shows that $\tilde{l}<j_{d}$.

The entry $v_{j_{d}}$ of $v$ appears in $w$ in the form $v_{j_{d}}\equiv w_{j}$
at some position $j\in[\,p\,]$. Since $\tilde{l}<j_{d}\leq i_{d}$
and elements of $v$ appear in the same order in $w$, we find $l<j<k$.
As we have shown earlier, by construction of $l$, we have $w_{m}<w_{k}$
for all $m\in(l,k)$ (and thus especially for all $m\in(j,k)$); and
by construction of $j$, we have $\omega^{-1}(w_{j})<_{P}\omega^{-1}(w_{k})$.
Therefore, condition $2b)$ of Def.~\ref{def:deletable} is satisfied.
It follows that every $d\in D$ is deletable in $w$, meaning that
$D\subset\text{Del}_{P}(w)$.

We have demonstrated that there is exactly one sequence $w$ of elements
of $\left[p\right]$ that satisfies conditions $a)-c)$ given at the
beginning of this proof, i.e., there is exactly one linear extension
$w$ such that $v=w\setminus D$ and $D\subset\text{Del}_{P}(w)$.
\end{proof}
\begin{example}
\noindent Let us demonstrate the insertion process described in Steps
1\&2 during the proof above. Consider the sequence $v=17536\in\mathcal{L}(P\setminus D)$
with the poset $P=\boldsymbol{3}\times\boldsymbol{3}$ shown in Fig.~\ref{fig:3x3 Hasse}
and the set of deleted elements $D=\left\{ 2,4,8,9\right\} $. For
the labels $d=2$ and $4$, we find for the set in Eq.~(\ref{eq:jd})
$\left\{ j\in[\,5\,]\,\vert\,\omega^{-1}(v_{j})<_{P}\omega^{-1}(d)\right\} =\left\{ j\in[\,5\,]\,\vert\,v_{j}\in\left\{ 1\right\} \right\} =\left\{ 1\right\} $,
and thus $j_{2}=j_{4}=1$. Since $v_{1}=1<2,4<v_{2}=7$, in Eq.~(\ref{eq:id})
we find $i_{2}=i_{4}=1$. Therefore, the labels $2$ and $4$ are
assigned to the subset $D^{1}$, and will be inserted into the sequence
$v$ between $v_{1}=1$ and $v_{2}=7$. For the label $8$, we find
that in Eq.~(\ref{eq:jd}), $\left\{ j\in[\,5\,]\,\vert\,\omega^{-1}(v_{j})<_{P}\omega^{-1}(8)\right\} =\left\{ j\in[\,5\,]\,\vert\,v_{j}\in\left\{ 5,7\right\} \right\} =\left\{ 2,3\right\} $,
and thus $j_{8}=3$. Since however the label $8$ is larger than any
of the following labels in $v$, $v_{3}=5$, $v_{4}=3$ and $v_{5}=6$,
we find in Eq.~(\ref{eq:id}) that $\left\{ i\,\vert\,j_{8}=3\leq i\leq5,\,v_{i}<8<v_{i+1}\right\} =\left\{ 5\right\} $
and therefore $i_{8}=5$. Finally, for the label $9$, we find that
in Eq.~(\ref{eq:jd}), $\left\{ j\in[\,5\,]\,\vert\,\omega^{-1}(v_{j})<_{P}\omega^{-1}(9)\right\} =\left\{ j\in[\,5\,]\,\vert\,v_{j}\in\left\{ 6,8\right\} \right\} =\left\{ 5\right\} $,
and thus $j_{9}=5$ and $i_{9}=5$. To summarize, in Step~1, we split
the set $D=\left\{ 2,4,8,9\right\} $ into the subsets $D^{0}=\varnothing$,
$D^{1}=\left\{ 2,4\right\} $, $D^{2}=\varnothing$, $D^{3}=\varnothing$,
$D^{4}=\varnothing$ and $D^{5}=\left\{ 8,9\right\} $. In Step~2,
the elements of each subset are arranged into a growing sequence,
specifically $d_{1}^{1}d_{2}^{1}=24$ and $d_{1}^{5}d_{2}^{5}=89$,
and inserted into $v$: 
\begin{figure}[H]
\noindent \centering{}\includegraphics[scale=0.5]{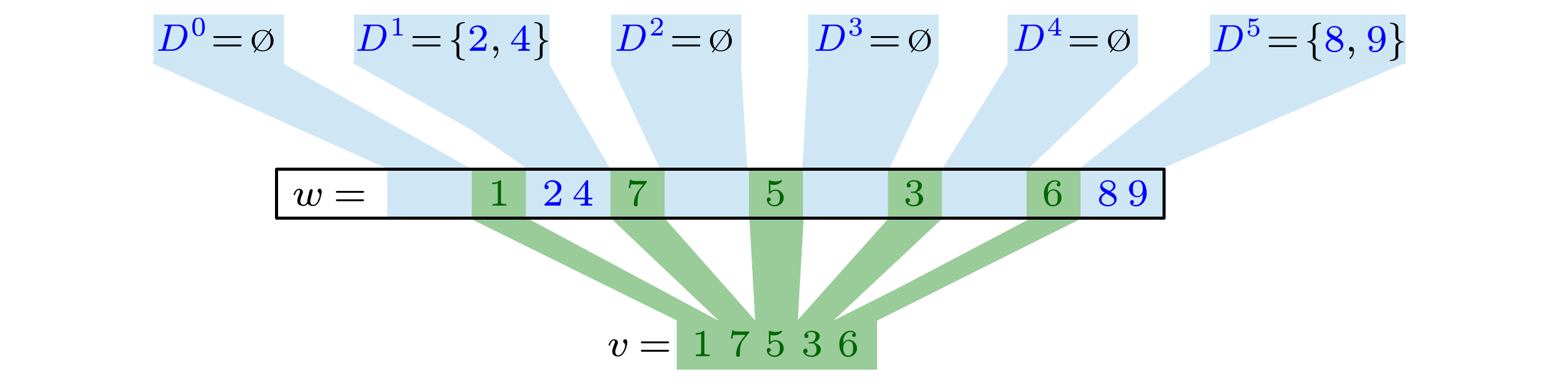}
\end{figure}
The resulting sequence $w=124753689$ was previously considered and
illustrated in Fig.~\ref{fig:six-linear-extensions}~$(e)$.
\end{example}

\begin{lem}
\label{Equiv<}Let $w$ be a linear extension of $P$ with the set
of deletable elements $\text{Del}_{P}(w)$. For every set $D\subset\text{Del}_{P}(w)$,
the sequence $w\setminus D$ is a linear extension of $P\setminus D$.
\end{lem}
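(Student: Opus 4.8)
The plan is to check directly that $w\setminus D$ satisfies the two defining requirements of a linear extension of $Q:=P\setminus D$, read in the inherited-labeling convention fixed in Subsection~\ref{subsec:New-terminology}. Write $u=w\setminus D=u_{1}\ldots u_{q}$ with $q=\#Q=p-\#D$. Recall that $u$ represents a linear extension of $Q$ precisely when the induced map $\sigma':Q\rightarrow\boldsymbol{q}$ sending each $t\in Q$ to the position of $\omega(t)$ in $u$ is an order-preserving bijection. So I would establish $(i)$ that $u$ is a bijection onto $[\,p\,]\setminus D$, and $(ii)$ that $\sigma'$ is order-preserving.

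For $(i)$, note that $w$ is a permutation of $[\,p\,]$ and every element of $D\subset[\,p\,]$ occurs in $w$ exactly once; deleting these occurrences therefore leaves each element of $[\,p\,]\setminus D$ exactly once, so the entries of $u$ are precisely the labels $\omega(t)$ for $t\in Q$, each appearing once. This makes $\sigma'$ a well-defined bijection $Q\rightarrow[\,q\,]$.

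For $(ii)$, take $s,t\in Q$ with $s<_{Q}t$. Since $Q$ is an induced subposet of $P$, this is equivalent to $s<_{P}t$, and since $w\in\mathcal{L}(P)$, the label $\omega(s)$ precedes $\omega(t)$ in $w$. The key observation is that deleting a set of entries from a sequence never reverses the relative order of two surviving entries; and here neither $\omega(s)$ nor $\omega(t)$ lies in $D$, because $s,t\in Q$ forces $\omega(s),\omega(t)\in[\,p\,]\setminus D$. Hence $\omega(s)$ still precedes $\omega(t)$ in $u$, i.e.\ $\sigma'(s)<\sigma'(t)$. Together $(i)$ and $(ii)$ show that $u=w\setminus D$ is a linear extension of $Q=P\setminus D$.

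I do not expect a genuine obstacle here: this is the easy direction complementary to Lemma~\ref{Equiv>}, and the argument uses only that $w\in\mathcal{L}(P)$, that deletion preserves the relative order of retained entries, and that $Q$ is induced. In fact the hypothesis $D\subset\text{Del}_{P}(w)$ is never invoked, so the conclusion holds for every $D\subset[\,p\,]$; the only care needed is bookkeeping, namely reading ``linear extension of a subposet'' in the inherited-labeling sense of Subsection~\ref{subsec:New-terminology} rather than re-labeling $Q$ by a fresh copy of $[\,q\,]$.
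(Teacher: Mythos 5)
Your proof is correct and takes essentially the same route as the paper's: both arguments reduce to observing that for $s<_{P\setminus D}t$ in the induced subposet one has $s<_{P}t$, hence $\omega(s)$ precedes $\omega(t)$ in $w$, and deletion of entries never reverses the relative order of surviving labels. Your additional remarks---the explicit bijectivity bookkeeping and the observation that the hypothesis $D\subset\text{Del}_{P}(w)$ is never actually used, so the claim holds for arbitrary $D\subset[\,p\,]$---are accurate; the paper's own proof likewise never invokes deletability.
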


\begin{proof}
Consider two elements $s,t\in P\setminus D$ with $s<_{P\setminus D}t$.
In order to show that $w\setminus D$ is a linear extension of $P\setminus D$,
we have to demonstrate that $\omega(s)$ appears in $v$ before $\omega(t)$.
Since $P\setminus D$ is an induced subposet of $P$, it follows from
$s<_{P\setminus D}t$ that $s<_{P}t$, and since $w$ is a linear
extension, this implies that $\omega(s)$ precedes $\omega(t)$ in
$w$. Clearly then, by construction of $v$, $\omega(s)$ also precedes
$\omega(t)$ in $v$.
\end{proof}
By combining the previous two lemmata, we find that
\begin{lem}
\label{all-lin-ext}The union of the Jordan-Hölder sets of all subposets
of $P$ is given by
\[
\bigcup_{Q\subset P}\mathcal{L}(Q)=\bigcup_{w\in\mathcal{L}(P)}\bigcup_{D\subset\text{Del}_{P}(w)}\left\{ w\setminus D\right\} .
\]
\end{lem}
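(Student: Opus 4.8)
The plan is to establish the stated set equality by proving the two inclusions separately, each of which follows almost immediately from one of the two preceding lemmata; no genuinely new argument is required, since the substance has already been discharged.

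First I would prove the inclusion ``$\supseteq$''. Fix a linear extension $w\in\mathcal{L}(P)$ and a subset $D\subset\text{Del}_{P}(w)$. Lemma~\ref{Equiv<} guarantees that $w\setminus D$ is a linear extension of the induced subposet $P\setminus D$. Since $P\setminus D$ is one of the subposets $Q\subset P$, the sequence $w\setminus D$ belongs to $\mathcal{L}(P\setminus D)\subset\bigcup_{Q\subset P}\mathcal{L}(Q)$. As $w$ and $D$ were arbitrary, every set $\{w\setminus D\}$ appearing on the right-hand side is contained in the left-hand side, and hence so is their union.

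Next I would prove the reverse inclusion ``$\subseteq$''. I take any element $v$ of the left-hand side; by definition $v\in\mathcal{L}(Q)$ for some induced subposet $Q\subset P$. Using the convention of Subsection~\ref{subsec:New-terminology}, every induced subposet is of the form $Q=P\setminus D$, where $D=[\,p\,]\setminus\omega(Q)$ is precisely the set of labels absent from $Q$; this is the step that requires identifying a subposet with its set of deleted labels. Lemma~\ref{Equiv>} then supplies a linear extension $w\in\mathcal{L}(P)$ with $v=w\setminus D$ and $D\subset\text{Del}_{P}(w)$. Hence $v$ lies in the inner union $\bigcup_{D\subset\text{Del}_{P}(w)}\{w\setminus D\}$ for this particular $w$, and therefore in the right-hand side.

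There is essentially no hard part here: the only point demanding a moment's care is the bookkeeping that lets us pass freely between an induced subposet $Q$ and the label set $D$ with $Q=P\setminus D$, so that the double union over $w\in\mathcal{L}(P)$ and $D\subset\text{Del}_{P}(w)$ exhausts exactly the linear extensions of all subposets. I would also remark that Lemma~\ref{Equiv>} in fact yields a \emph{unique} $w$ for each pair $(v,D)$; this uniqueness is not needed for the set equality itself, but it is the property that will later allow the right-hand union to be enumerated without overcounting, as exploited in the proof of Theorem~\ref{Extended-order-polynomial}.
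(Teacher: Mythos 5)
Your proof is correct and follows essentially the same route as the paper: both directions rest exactly on Lemmata~\ref{Equiv<} and~\ref{Equiv>}, with the paper merely organizing the same argument as a per-$D$ equality $\mathcal{L}(P\setminus D)=\bigcup_{w:\,D\subset\text{Del}_{P}(w)}\{w\setminus D\}$ followed by a union over $D\subset[\,p\,]$, rather than as two separate inclusions. Your closing remark about uniqueness playing its role only later, in the proof of Theorem~\ref{Extended-order-polynomial}, is also accurate.
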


\begin{proof}
Consider first a set $D\subset[\,p\,]$ and the corresponding subposet
of $P$ given by $P\setminus D$. It follows directly from Lemmata~\ref{Equiv>}
and \ref{Equiv<} that the collection of linear extensions of $P\setminus D$
is can be written as
\[
\mathcal{L}(P\setminus D)=\bigcup_{\substack{w\in\mathcal{L}(P)\\
\text{for which}\\
D\subset\text{Del}_{P}(w)
}
}\left\{ w\setminus D\right\} .
\]
Therefore, the set of linear extensions of subposets of $P$ is given
by
\[
\bigcup_{Q\subset P}\mathcal{L}(Q)=\bigcup_{D\subset[\,p\,]}\mathcal{L}(P\setminus D)=\bigcup_{D\subset[\,p\,]}\bigcup_{\substack{w\in\mathcal{L}(P)\\
\text{for which}\\
D\subset\text{Del}_{P}(w)
}
}\left\{ w\setminus D\right\} =\bigcup_{w\in\mathcal{L}(P)}\bigcup_{D\subset\text{Del}_{P}(w)}\left\{ w\setminus D\right\} .
\]
\end{proof}

\section{Proof of Theorem~\emph{\ref{Extended-order-polynomial}}}

We are now ready to combine the so far derived lemmata into the derivation
of the closed from of the extended order polynomial given in the first
section.
\begin{proof}
\emph{(of Theorem~\ref{Extended-order-polynomial})} By application
of the Definitions given in Eqs.~(\ref{eq:StrictOrderPoly}) and
(\ref{eq:GenOrderPoly}) as well as (in line 3) Lemmata~\ref{same-nr-descents}
and \ref{all-lin-ext}, we find
\begin{eqnarray*}
\text{E}_{P}^{\circ}(n,z) & = & \sum_{Q\subset P}\Omega_{Q}^{\circ}(n)z^{\#Q}\\
 & = & \sum_{Q\subset P}\sum_{v\in\mathcal{L}(Q)}\binom{n+\text{des}(v)}{\#Q}z^{\#Q}\\
 & = & \sum_{w\in\mathcal{L}(P)}\sum_{D\subset\text{Del}_{P}(w)}\binom{n+\text{des}(w)}{p-\#D}z^{p-\#D}\\
 & = & \sum_{w\in\mathcal{L}(P)}\sum_{k=0}^{p}\#\left\{ D\subset\text{Del}_{P}(w)\,\vert\,\#D=k\right\} \cdot\binom{n+\text{des}(w)}{p-k}z^{p-k}\\
 & = & \sum_{w\in\mathcal{L}(P)}\sum_{k=0}^{p}\binom{\text{del}_{P}(w)}{k}\cdot\binom{n+\text{des}(w)}{p-k}z^{p-k}
\end{eqnarray*}
By inverting the order of summation in the inner sum, we obtain Eq.~(\ref{eq:Znz}).
\end{proof}

\section{Perspectives}

Our main motivation to develop the extended strict order polynomial
$\text{E}_{P}^{\circ}(n,z)$ introduced in the current communication
is its close relation to the Zhang-Zhang polynomial \cite{zhang1996theclar,zhang1996theclar2,zhang2000theclar}
enumerating Clar covers of benzenoid hydrocarbons \cite{clar1972thearomatic},
a topic to which we devoted feverish activity in our laboratory for
almost a decade now \cite{chou2012analgorithm,chou2014zzdecomposer,chou2014determination,witek2015zhangzhang,witek2017zhangzhang,langner2019IFTTheorems5,langner2019BasicApplications6}.
Our recent contribution, introducing the interface theory of benzenoids
\cite{langner2019IFTTheorems5,langner2019BasicApplications6}, demonstrated
that enumeration of Clar covers of a benzenoid $\boldsymbol{B}$ can
be efficiently achieved by studying distributions of covered interface
edges in interfaces of $\boldsymbol{B}$. The relative positions of
the covered edges can be expressed in a form of a poset. Without giving
too many unnecessary details, we can say that for a regular benzenoid
strip $\boldsymbol{B}$ of length $n$, there exists a poset $P$
such that the extended strict order polynomial $\text{E}_{P}^{\circ}(n,z)$
coincides with the Zhang-Zhang polynomial $\textrm{ZZ}\left(\boldsymbol{B},x\right)$
of $\boldsymbol{B}$ (with $z=x+1$). A detailed proof of this fact
is quite technical and will be announced soon. The announced equivalence
between the extended strict order polynomials $\text{E}_{P}^{\circ}(n,z)$
developed in the current study and the Zhang-Zhang polynomials $\textrm{ZZ}\left(\boldsymbol{B},x\right)$
of regular benzenoid strips $\boldsymbol{B}$ allows us to recognize
(currently without a formal proof) a large collection of facts about
$\text{E}_{P}^{\circ}(n,z)$ due to the previously discovered facts
about the ZZ polynomials. Among others, the following facts are easy
to deduce:
\begin{enumerate}
\item The chain $P=\boldsymbol{p}$ corresponds to a parallelogram $M\left(p,n\right)$
\begin{figure}[H]
\noindent \centering{}\includegraphics[scale=0.45]{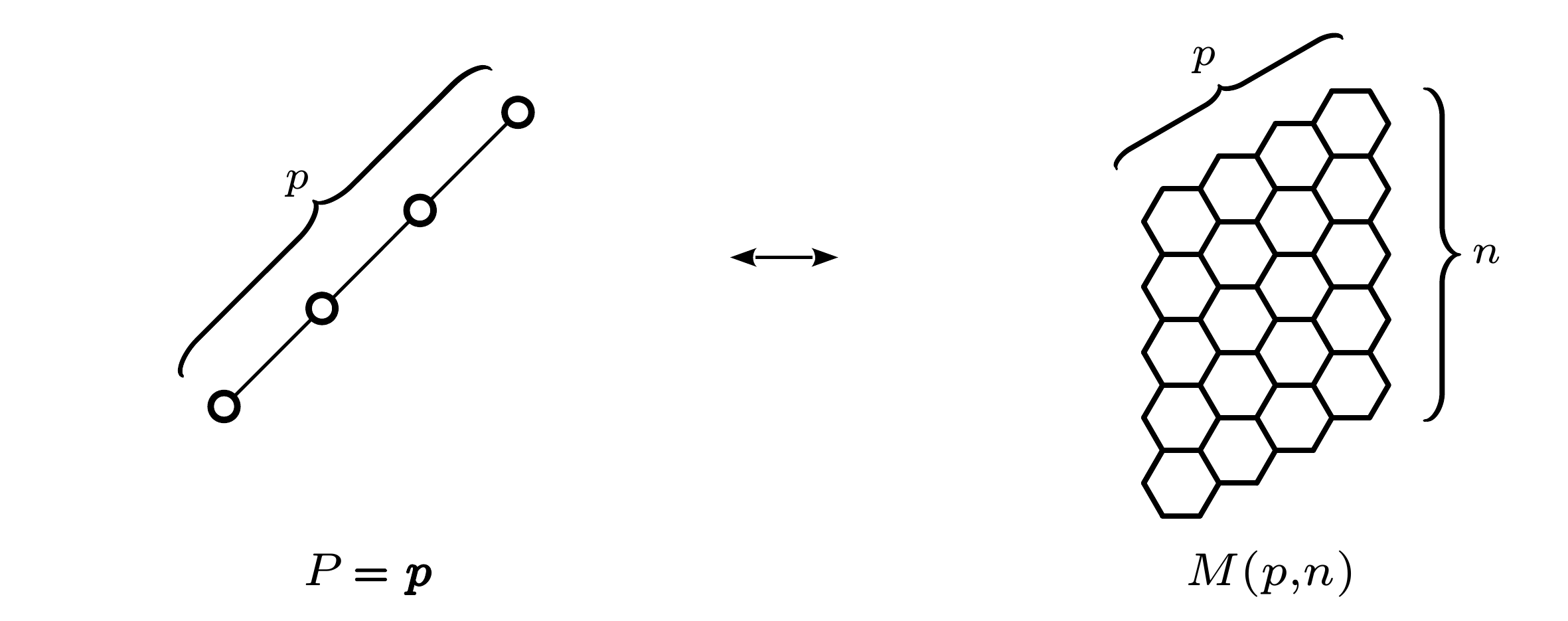}
\end{figure}
for which the ZZ polynomial is given in form of a hypergeometric function,
$\textrm{ZZ}\left(M\left(m,n\right),x\right)={}_{2}F_{1}\negthinspace\left[\begin{array}{c}
-m,-n\\
1
\end{array};x+1\right]$ \cite{gutman2006zhangtextendash,chou2012analgorithm,chou2014closedtextendashform};
consequently, we have
\begin{equation}
\text{E}_{\boldsymbol{p}}^{\circ}(n,z)={}_{2}F_{1}\negthinspace\left[\begin{array}{c}
-p,-n\\
1
\end{array};z\right].\label{eq:chainEp}
\end{equation}
This result is also directly obvious from Theorem~\ref{Extended-order-polynomial}:
The Jordan-Hölder set of $\boldsymbol{p}$ consists of only one element,
$\mathcal{L}(\boldsymbol{p})=\{123\ldots p\}$, for which $\text{del}_{\boldsymbol{p}}(123\ldots p)=p$
and $\text{des}_{\boldsymbol{p}}(123\ldots p)=0$. Thus, Eq.~(\ref{eq:Znz})
immediately assumes the form of Eq.~(\ref{eq:chainEp}).
\item The poset $P$ containing $p$ non-comparable elements corresponds,
according to the interface theory of benzenoids, to a prolate rectangle
$Pr\left(p,n\right)$
\begin{figure}[H]
\noindent \centering{}\includegraphics[scale=0.45]{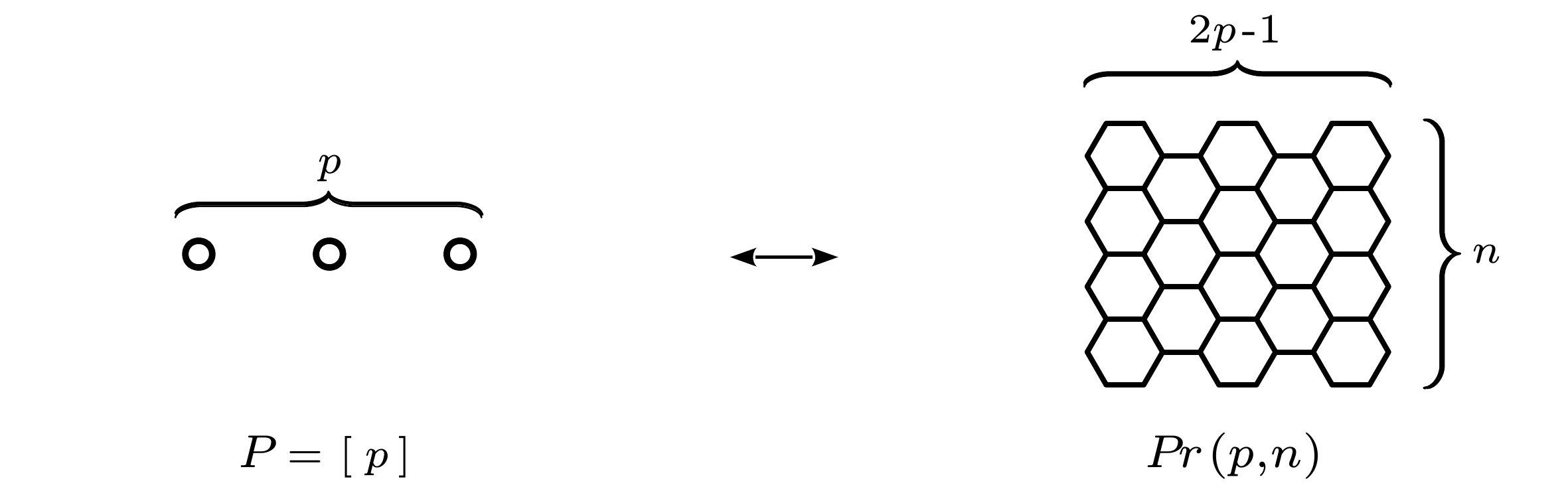}
\end{figure}
for which the ZZ polynomial is given by $\textrm{ZZ}\left(Pr\left(m,n\right),x\right)=\left(1+n\left(x+1\right)\right)^{m}$
\cite{zhang2000theclar,chou2012zhangzhang,chou2016closedform}; consequently,
we have
\begin{equation}
\text{E}_{[\,p\,]}^{\circ}(n,z)=\left(1+nz\right)^{p}.
\end{equation}
\item The poset $P=\boldsymbol{2}\times\boldsymbol{m}$ corresponds to a
hexagonal graphene flake $O\left(2,m,n\right)$
\begin{figure}[H]
\noindent \centering{}\includegraphics[scale=0.45]{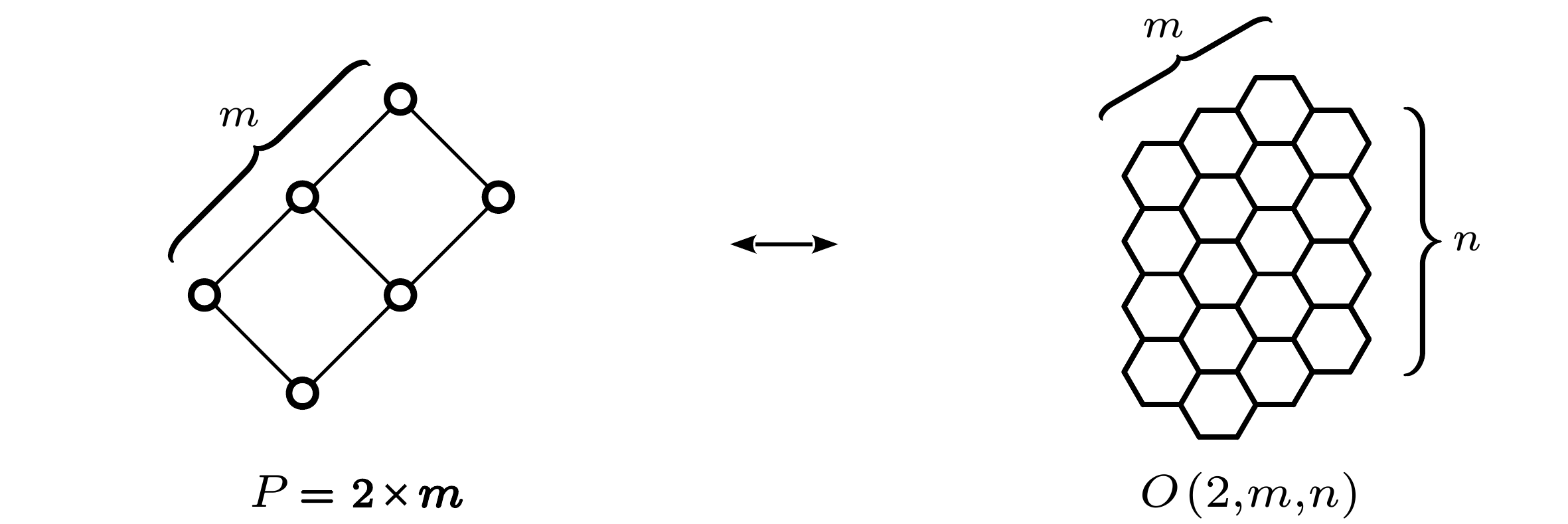}
\end{figure}
It follows from the ZZ polynomial $\text{ZZ}\left(O\left(2,m,n\right),x\right)$
\cite{He2020determinantal,He2020JohnSachs,witek2020overlappingparas}
that the strict order polynomial has the form of a $2\times2$ determinant
\begin{equation}
\text{E}_{\boldsymbol{2}\times\boldsymbol{m}}^{\circ}(n,z)=\left|\begin{array}{ll}
\phantom{z\binom{n+1}{2}\,}{}_{2}F_{1}\negthinspace\left[\begin{array}{c}
\phantom{1}-m,\phantom{1}-n\\
1
\end{array};z\right] & \,\,\,z\binom{m+1}{2}\,{}_{2}F_{1}\negthinspace\left[\begin{array}{c}
1-m,1-n\\
3
\end{array};z\right]\\
\\
z\binom{n+1}{2}\,{}_{2}F_{1}\negthinspace\left[\begin{array}{c}
1-m,1-n\\
3
\end{array};z\right] & \,\,\,\phantom{z\binom{m+1}{2}\,}{}_{2}F_{1}\negthinspace\left[\begin{array}{c}
\phantom{1}-m,\phantom{1}-n\\
1
\end{array};z\right]
\end{array}\right|
\end{equation}
\item The strict order polynomial for the lattice $P=\boldsymbol{l}\times\boldsymbol{m}$
is unknown, following the fact that this poset corresponds to the
hexagonal flake $O\left(l,m,n\right)$
\begin{figure}[H]
\noindent \centering{}\includegraphics[scale=0.45]{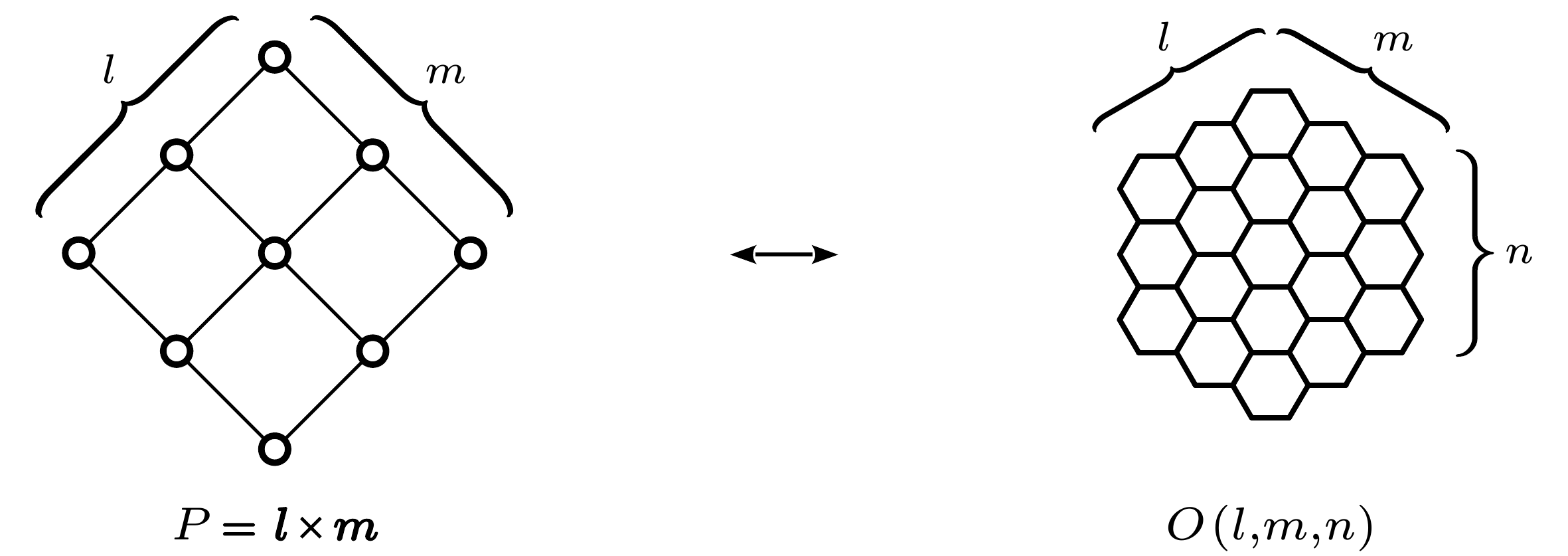}
\end{figure}
The ZZ polynomial $\text{ZZ}\left(O\left(l,m,n\right),x\right)$ of
this structure constitutes the hardest unsolved problem in the theory
of ZZ polynomials \cite{chou2012zhangzhang,chou2014determination,He2020determinantal,witek2020flakes1-8}.
\item The fence $P=Q(1,m)$ with $m$ elements corresponds to a zigzag chain
$Z(m,n)$
\begin{figure}[H]
\noindent \centering{}\includegraphics[scale=0.45]{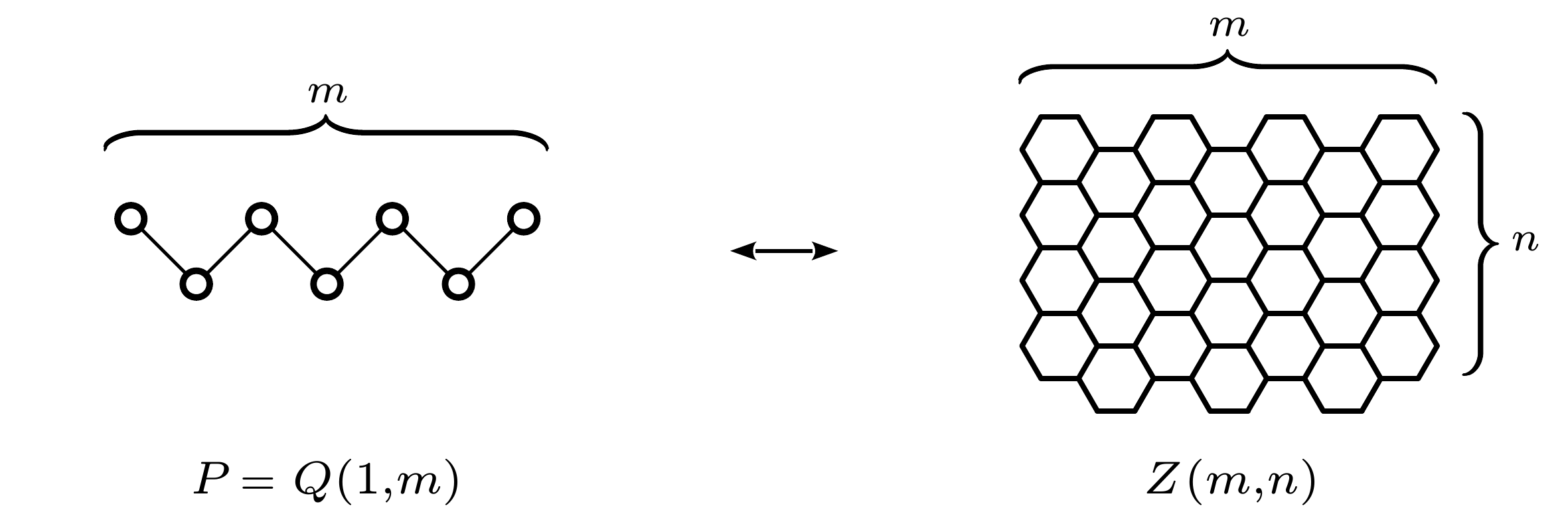}
\end{figure}
 The expression for $\text{ZZ}\left(Z\left(m,n\right),x\right)$\textemdash and
consequently, $\text{E}_{P}^{\circ}(n,z)$\textemdash is given by
a very lengthy formula \cite{chou2012zhangzhang,chou2014determination,langner2018multiplezigzag1},
but the associated generating function has the form of a continued
fraction \cite{langner2018multiplezigzag1}
\begin{figure}[H]
\noindent \centering{}\begin{equation} \includegraphics[scale=0.50]{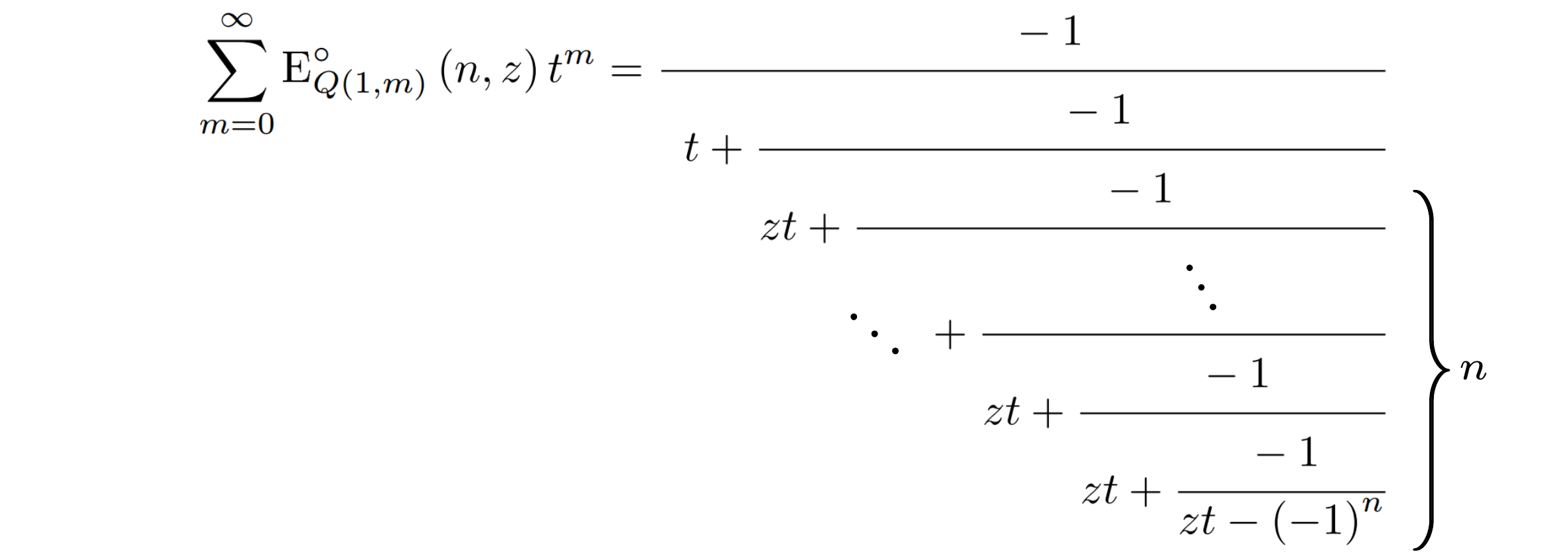}\label{eq:genZZZb} \end{equation}
\end{figure}
An analogous generating function with respect to $n$ is unknown.
\end{enumerate}
The extended order polynomial $\text{E}_{P}^{\circ}(n,z)$ can be
also computed in an efficient fashion directly from Eq.~(\ref{eq:Znz})
through an algorithm based on a graph of ,,compatible'' antichains
of $P$. Propagating weights through this graph in a certain way yields
the extended order polynomial without ever having to construct the
entire set $\mathcal{L}(P)$. This algorithm has been implemented
in Maple 16 \cite{maple16} and will be reported later. For the example
of the poset $P=\boldsymbol{3}\times\boldsymbol{3}$ depicted in Fig.~\ref{fig:3x3 Hasse},
we obtain in this way
\begin{eqnarray}
\text{E}_{\boldsymbol{3}\times\boldsymbol{3}}^{\circ}(n,z) & = & \sum_{k=0}^{9}\left(\binom{9}{k}\binom{n}{k}+\left(9\binom{9-2}{k-2}+\binom{9-3}{k-3}\right)\binom{n+1}{k}\right.\label{eq:EP3x3}\\
 &  & \hphantom{\sum_{k=0}^{9}\,}+\left(\binom{9-3}{k-3}+17\binom{9-4}{k-4}+2\binom{9-5}{k-5}\right)\binom{n+2}{k}\nonumber \\
 &  & \hphantom{\sum_{k=0}^{9}}\left.\,+\left(2\binom{9-5}{k-5}+7\binom{9-6}{k-6}+\binom{9-7}{k-7}\right)\binom{n+3}{k}+\binom{9-7}{k-7}\binom{n+4}{k}\right)z^{k}.\nonumber 
\end{eqnarray}
We suspect that the coefficients $e_{l,j}\left(P\right)$ appearing
in $\text{E}_{P}^{\circ}(n,z)$ in front of the terms $\binom{9-2l-j}{k-2l-j}\binom{n+l}{k}$
are \#P-complete to compute, in close analogy to the coefficients
$e\left(P\right)$ corresponding to the number of linear extensions
of $P$. These coefficients are growing very fast with the size of
the poset $P$. The largest of the coefficients $e_{l,j}\left(\boldsymbol{3}\times\boldsymbol{3}\right)$
is only 17 (as can be easily seen from Eq.~(\ref{eq:EP3x3})), but
larger $P$ are characterized by much greater coefficients, e.g.,
$\max\text{\ensuremath{\left(e_{l,j}\left(\boldsymbol{4}\times\boldsymbol{4}\right)\right)}=3765}$,
 $\max\text{\ensuremath{\left(e_{l,j}\left(\boldsymbol{4}\times\boldsymbol{5}\right)\right)}=200440}$,
$\max\text{\ensuremath{\left(e_{l,j}\left(\boldsymbol{5}\times\boldsymbol{5}\right)\right)}=61885401}$,
and $\max\text{\ensuremath{\left(e_{l,j}\left(\boldsymbol{5}\times\boldsymbol{6}\right)\right)}=}27950114975$.

It seems that the introduced here extended strict order polynomial
$\text{E}_{P}^{\circ}(n,z)$ can be immediately generalized to the
non-strict case using the reciprocity theorem of Stanley (Corollary~3.15.12
of \cite{stanley_enumerative_1986}), but this problem is not pursued
here further.

\end{document}